\newfont{\sdbl}{msbm9}
\newfont{\dbl}{msbm10 at 12pt}
\theoremstyle{definition}
\newcommand{\dz}{{\mbox{\dbl Z}}}
\newcommand{\dn}{{\mbox{\dbl N}}}
\newcommand{\sdz}{{\mbox{\sdbl Z}}}
\newcommand{\dq}{{\mbox{\dbl Q}}}
\newcommand{\ord}{\mathop{\rm ord}\nolimits}
\newcommand{\idm}{\mathfrak{m}}
\newcommand{\Ord}{\mathop {\rm \bf ord}}
\newcommand{\bd}{\mathbb{D}}
\newcommand{\bl}{\mathbb{L}}
\newtheorem{Def}{Definition}[section]
\newtheorem{rem}{Remark}[section]
\newtheorem{ex}{Example}[section]
\theoremstyle{plain}
\newtheorem{theorem}{Theorem}[section]
\newtheorem{lemma}{Lemma}[section]
\newtheorem{cor}{Corollary}[section]
\numberwithin{equation}{section}
\newcommand{\crr}{{{\cal R}}}
\newcommand{\cf}{{{\cal F}}}
\newcommand{\cm}{{{\cal M}}}
\begin{document} 
\title{Normal forms for ordinary differential operators, II}
\author{J. Guo \and  A.B. Zheglov}
\date{}
\maketitle


\begin{abstract}
In this paper, which is  a follow-up of our first paper "Normal forms for ordinary differential operators, I", we extend the theory of normal forms for non-commuting operators, and obtain as an  application  a commutativity criterion for operators in the Weyl algebra or, more generally, in the ring of ordinary differential operators, which we prove in the case when operators have a normal form with  the  restriction top line (for details see Introduction).

\end{abstract}


\markright{Normal forms for ODOs}

\tableofcontents

\section{Introduction}
\label{S:introduction}

This paper is a follow-up of \cite{GZ24} and we use its notation. We collect all necessary notation in the list \ref{S:list} below, for details we refer to  \cite{GZ24}.

In \cite{GZ24} we developed the generic theory of normal forms for ordinary differential operators, which was conceived and developed as a part of the generalised Schur theory offered in \cite{A.Z}, and applied it to obtain  a new explicit parametrisation of torsion free rank one sheaves on projective irreducible curves with vanishing cohomology groups.

In this paper we obtain the second application -- a commutativity criterion for operators in the Weyl algebra or, more generally, in the ring of ordinary differential operators. It is motivated by the following natural question from the Burchnall-Chaundy theory.  

The famous Burchnall-Chaundy lemma (\cite{BC1}) says  that any two commuting differential operators $P,Q\in D_1:=K[[x]][\partial ]$ are algebraically dependent. More precisely, if the orders $n,m$ of operators $P,Q$ are coprime\footnote{i.e. the rank of the ring $K[P,Q]$ is 1, see e.g. \cite{Zheglov_book} for relevant definitions, in particular \cite[Lemma 5.23]{Zheglov_book} for a proof of the Burchnall-Chaundy lemma in general form. The statement about the form of polynomial follows easily from the proof.}, then there exists an irreducible polynomial $f(X,Y)$ of weighted degree $v_{n,m}(f)=mn$ of {\it special form} (here the weighted degree is defined as in Dixmier's paper \cite{Dixmier}, cf. item 4 in the List of Notations below): $f(X,Y) = \alpha X^m\pm Y^n+\ldots $ (here $\ldots$ mean terms of lower weighted degree, $0\neq\alpha\in K$; in particular, for coprime $n$ and $m$ the polynomial $f$ is automatically irreducible), such that $f(P,Q)=0$. A similar result for commuting operators of rank $r$ was established in \cite{Wilson} (cf. \cite[Th. 2.11]{Previato2019}), in this case $m=\ord (Q)/r$, $n=\ord (P)/r$, and again $GCD (m,n)=1$. Vice versa, if $P,Q\in D_1$ is a solution of such polynomial $f(X,Y)$\footnote{A solution of the equation $f(X,Y)=0$  is a pair $(P,Q)\in D$ such that $\sum_{i,j=0}^n \alpha_{ij}P^iQ^j = 0$.}, then  $[P,Q]=0$. Now a natural question whether $F(P,Q)=0 \Rightarrow [P,Q]=0$ for generic polynomial $F$ appears. This question appears to be surprisingly difficult in general case. We give a partial affirmative answer on this question in the case when the normal form has the {\it restriction top line} (see discussion below).

Recall that {\it a normal form} of a pair of operators $P,Q\in \hat{D}_1^{sym}$ is a pair $P', Q' \in  \hat{D}_1^{sym}$ obtained after conjugation by a 
Schur operator $S$ as above, calculated using one of the operators in a pair $(P,Q)$ (or, more generally, in the ring $\hat{D}_1^{sym}$, see definitions 2.7, 3.6 and remark 2.7 in \cite{GZ24}). The normal form is not uniquely defined, but up to conjugation with invertible $S\in \hat{D}_1^{sym}$ from the centralizer $C(\partial^k)$  with $\Ord (S)=0$. Such $S$ is known to be a {\it polynomial} of restricted degree. Notably, the whole centraliser $C(\partial^k)$ is naturally isomorphic to a matrix $k\times k$ algebra over a polynomial ring, see remark 3.3 in \cite{GZ24}.
The normal form of any pair of {\it commuting operators} can be explicitly calculated. If the operators do not commute, the normal forms will  be series in general, for which, however, it is possible to calculate any given number of terms. For a pair of {\it differential operators} normal forms satisfy condition $A_q(0)$.

To study normal forms of non-commuting operators we develop a technique of Newton regions (see section \ref{S:normal_forsm_for_non-commuting}) -- this is a natural generalisation of the technique of Newton polygons widely used for study of operators in the Weyl algebra (cf. \cite{Dixmier}, \cite{GGV}, \cite{GGV2}, \cite{ML}, \cite{ML2}). Since normal forms of non-commuting operators are usually infinite series, the convex hull of all monomials may not be  a restricted domain. However, in this case it is possible to define relevant notions of weights and top lines (generalisations of corresponding notions from \cite{Dixmier}). In section \ref{S:normal_forsm_for_non-commuting} we study normal forms of a pair of non-commuting monic differential operators $P,Q\in D_1$. After conjugating this pair by a Schur operator of, say, operator $Q$, we obtain a monic operator $P'\in \hat{D}_1^{sym}$ satisfying condition $A_q(0)$ (where $q$ is the order of $Q$). It is possible to define a weight function $v_{\sigma ,\rho}$ and a notion of related top line for such operators. We distinguish 2 principal cases of top lines: the restriction top line and the asymptotic  top line, both lines are uniquely defined (see definitions \ref{D:restriction top line} and \ref{D:asymptotic top line}). Lemma \ref{L: res or asym} says that there are only two possibilities for a non commuting with $\partial^q$ operator $P'$: it has either the restriction top line or the asymptotic top line. In section \ref{S:restriction top line} we give the affirmative answer on the question whether $F(P,Q)=0 \Rightarrow [P,Q]=0$ in the case when the normal form $P'$ of the pair $P,Q$ has the  restriction top line.

We will consider the remaining case of the asymptotic top line  in the next articles, since this case requires much more details. We hope that further development of the technique of normal forms and related concepts, which are touched upon in this work, will also allow us to approach the solution of other problems related to Weyl algebras, cf. e.g. the works \cite{Dixmier}, \cite{BEE}, \cite{BK1}, \cite{Bavula3}, \cite{Joseph}, \cite{Ts2}.

\bigskip

The structure of this article is the following. 

In section \ref{S:normal_forsm_for_non-commuting} we study normal forms of non-commuting differential operators. In section \ref{S:General_Newton} we introduce the notion of Newton region -- a natural generalisation of the Newton polygon -- for operators from $\hat{D}_1^{sym}$ and study its basic properties for operators satisfying condition $A_k(0)$ (all normal forms of differential operators satisfy this condition). In section \ref{S:Key combinatorial lemma} we prove one general combinatorial lemma, and in section \ref{S:restriction top line} we prove the main theorem of section \ref{S:normal_forsm_for_non-commuting} -- a commutativity criterion of a pair of operators in the case when the normal form of this pair has the restriction top line.  

In section \ref{S:appendix} we collect all necessary basic technical assertions about the weight function $v_{\sigma,\rho}$ and the homogeneous highest terms $ f_{\sigma,\rho}$ used in section \ref{S:normal_forsm_for_non-commuting}, with detailed proofs.

{\bf Acknowledgements.}  The authors' research was supported by the Moscow Center of Fundamental and Applied Mathematics of Lomonosov Moscow State University under agreement No. 075-15-2025-345. 

The work of J. Guo was also partially supported by the Leshan Normal University Scientific Research Start-up Project for Introducing High-level Talents.

We are grateful to the Sino-Russian Mathematics Center at Peking University for hospitality and excellent working conditions while preparing this paper.

We are grateful to Huijun Fan for his interest in our work, and for his advice and support to J. Guo. 

We are also grateful to the anonymous referees whose remarks allowed to improve the exposition.

\subsection{List of notations}
\label{S:list}

 Here we recall  the most important notations used in this paper from \cite{GZ24}.
 
1.  $\dz_+$ is the set of all non-negative integers, $\dn$ is the set of natural numbers (all positive integers). $K$ is a field of characteristic zero. Recall some notation from \cite{A.Z}:
	$\hat{R}:=K [[x_1,\ldots ,x_n]]$, the $K$-vector space 
$$
\cm_n := \hat{R} [[\partial_1, \dots, \partial_n]] = \left\{
\sum\limits_{\underline{k} \ge \underline{0}} a_{\underline{k}} \underline{\partial}^{\underline{k}} \; \left|\;  a_{\underline{k}} \in \hat{R} \right. \;\mbox{for all}\;  \underline{k} \in \dn_0^n
\right\},
$$
$\upsilon:\hat{R}\rightarrow \dn_0\cup \infty$ --  the discrete valuation defined by the unique maximal ideal $\idm = (x_1, \dots, x_n)$ of $\hat{R}$,  \\
for any element
$
0\neq P := \sum\limits_{\underline{k} \ge \underline{0}} a_{\underline{k}} \underline{\partial}^{\underline{k}} \in \cm_n
$
$$
\Ord (P) := \sup\bigl\{|\underline{k}| - \upsilon(a_{\underline{k}}) \; \big|\; \underline{k} \in \dn_0^n \bigr\} \in \dz \cup \{\infty \},
$$
$$
\hat{D}_n^{sym}:=\bigl\{Q \in \cm_n \,\big|\, \Ord (Q) < \infty \bigr\};
$$
$
P_m:= \sum\limits_{ |\underline{i}| - |\underline{k}| = m} \alpha_{\underline{k}, \underline{i}} \,  \underline{x}^{\underline{i}} \underline{\partial}^{\underline{k}}
$ -- the $m$-th \emph{homogeneous component} of $P$,\\
$\sigma (P):=P_{\Ord (P)}$ -- the highest symbol.
	
2. In this paper we use: 	
	$\hat{R}:=K[[x]]$, $D_1:=\hat{R}[\partial]$, 
$$\hat{D}_1^{sym}:=\{Q=\sum_{k\ge 0}a_k\partial^k|\Ord(Q)<\infty\}.$$ 

Operators:  $\delta:=\exp((-x)\ast \partial)$\footnote{ Here and further $\ast$ in all exponentials means that we consider normalized Taylor power series, i.e. the powers of $x$ always stand on the left of powers of $\partial$, for example $\delta:=\exp((-x)\ast \partial)=\sum_{k=0}^{+\infty}\frac{(-1)^k}{k!}x^k\partial^k$.},   $\int:=(1-exp((-x)\ast \partial))\partial^{-1} $,  	$A_{k;i}:=\exp((\xi^{i}-1)x\ast \partial)\in \hat{D}^{sym}_{1}\hat{\otimes}_{K}\tilde{K}$ (in the case when $k$ is fixed, simply written as $A_i$), where $\tilde{K}=K[\xi]$, $\xi$ is a primitive $k$th root of unity, 
$\Gamma_i=(x\partial)^i$. $B_n=\frac{1}{(n-1)!}x^{n-1}\delta\partial^{n-1}$. 

$\hat{D}^{sym}_{1}\hat{\otimes}_{K}\tilde{K}$ means the same ring $\hat{D}^{sym}_{1}$, but defined over the base field $\tilde{K}$.

The operator $P\in D_1$ is called {\it normalized} if $P=\partial^p+a_{p-2}\partial^{p-2}+\ldots $. The operator $P\in D_1$ is {\it monic} if its highest coefficient is 1. Analogously, $P\in \hat{D}_1^{sym}$ is monic if $\sigma (P)=\partial^p$. 

	We denote $D^i=\partial^i$ if $i\ge 0$ and $\int^{-i}$ if $i<0$. Operators written in the (Standard) form as 
	$$
	H=[\sum_{0\leq i<k}f_{i;r}(x, A_{k;i}, \partial )+\sum_{0<j\leq N}g_{j;r}B_{j}]D^{r}
	$$
	are called HCP and  form a sub-ring $Hcpc(k)$.   Here $f_{i;r}(x, A_{k;i}, \partial)$ is a polynomial of $x, A_{k;i},\partial$,  $\Ord(f_{i;r})=0$,  of the form
		$$
		f_{i;r}(x, A_{k;i}, \partial )=\sum_{0\leq l\leq d_{i}}f_{l,i;r}x^l A_{k;i}\partial^l
		$$
		for some $d_i\in \dz_+$, where $f_{l,i;r}\in \tilde{K}$. The number  $d_i$ is called the {\it $x$-degree of $f_{i;r}$}:  $deg_{x}(f_{i;r}):= d_i$; $g_{j;r}\in \tilde{K}$, $g_{j;r}=0$ for $j\le -r$ if $r<0$.

	They can be written also in G-form: 
	$$
	H=(\sum_{0\leq i<k}\sum_{0\leq l\leq d_i} f'_{l,i;r}\Gamma_lA_i+\sum_{0<j\leq N}g_{j;r}B_{j})D^{r}
	$$
	The $A$ and $B$ Stable degrees of HCP are defined as 
	$$
	Sdeg_A(H)=\max \{d_i|\quad 0\leq i<k \} \quad \mbox{or $-\infty$, if all $f_{l,i;r}=0$ } 
	$$
	and 
	$$Sdeg_B(H)=\max\{j|\quad g_{j;r}\neq0\} \quad \mbox{or $-\infty$, if all $g_{j;r}=0$}
	$$
	
	In the case when $Sdeg_B(HD^p)=-\infty,\forall p\in \mathbf{Z}$ $H$ is called {\it totally free of} $B_j$.
	
	An operator $P\in  \hat{D}_1^{sym}$ satisfies {\it condition $A_q(k)$}, $q,k\in \dz_+$, $q>1$ if 
	\begin{enumerate}
		\item
		$P_{t}$ is a HCP  from $Hcpc (q)$  for all $t$;
		\item
		$P_{t}$ is totally free of $B_j$ for all $t$;
		\item
		$Sdeg_A(P_{\Ord (P)-i})< i+k$ for all $i>0$;
		\item
		$\sigma (P)$ does not contain $A_{q;i}$, $Sdeg_A(\sigma (P))=k$.
	\end{enumerate}

3. In section 3, $\mathfrak{B}=\crr_S$ is the right quotient ring of $\crr = \tilde{K}^{\oplus k} [D,\sigma ]$ by $S=\{D^k|k\ge 0\}$. And the ring of skew pseudo-differential operators 
	$$
	E_k:=\tilde{K}[\Gamma_1, A_1]((\tilde{D}^{-1}))=\{\sum_{l=M}^{\infty}P_l\tilde{D}^{-l} | \quad P_l\in \tilde{K}[\Gamma_1, A_1]\} \simeq \tilde{K}^{\oplus k}[\Gamma_1]((\tilde{D}^{-1}))
	$$
with the commutation relations 
$$
\tilde{D}^{-1}a=\sigma (a)\tilde{D}^{-1}, \quad a\in \tilde{K}[\Gamma_1, A_1] \quad \mbox{where \quad }
\sigma (A_1)= \xi^{-1} A_1, \quad \sigma (\Gamma_1)=\Gamma_1+1.
$$	
	
	$\widehat{Hcpc}_B(k)$ is the $\tilde{K}$-subalgebra in $\hat{D}_1^{sym}\hat{\otimes}\tilde{K}$ consisting of operators whose homogeneous components are HCPs totally free of $B_j$. 
	
	$$\Phi : \tilde{K}[A_1,\ldots ,A_{k-1}]\rightarrow \tilde{K}^{\oplus k} , \quad   P\mapsto (\sum_i p_i\xi^{i}, \ldots , \sum_i p_i\xi^{i(k-1)})
$$
is an isomorphism of $\tilde{K}$-algebras. 	

The map 
$$
\hat{\Phi}: \widehat{Hcpc}_B(k) \hookrightarrow E_k
$$
defined on monomial HCPs from $\widehat{Hcpc}_B(k)$ as $\hat{\Phi} (a A_j\Gamma_iD^l):=a \Phi (A_j)\Gamma_i\tilde{D}^l$ and extended by linearity on the whole $\tilde{K}$-algebra $\widehat{Hcpc}_B(k)$, is an embedding of $\tilde{K}$-algebras. 
	
	Suppose $B$ is a commutative sub-algebra of $D_1$, then $(C,p,\cf)$ stands for the spectral data of $B$ (the spectral curve, point at infinity and the spectral sheaf with vanishing cohomologies).
	
	The classical ring of pseudo-differential operators  is defined as
	$$
E=K[[x]]((\partial^{-1})).
	$$

There is an isomorphism of $\tilde{K}$-algebras $\psi:\mathfrak{B}\rightarrow M_k(C(\mathfrak{B}))$, where $C(\mathfrak{B})\simeq \tilde{K}[\tilde{D}^k, \tilde{D}^{-k}]$, ($\tilde{K}$ is diagonally embedded into $\tilde{K}^{\oplus k}$):
	$$
	\psi\begin{pmatrix}
		h_0 \\
		h_1 \\
		\cdots \\
		h_{k-1}
	\end{pmatrix}=\begin{pmatrix}
		h_0 &  &  &  \\
		& h_1 &  &  \\
		&  & \cdots &  \\
		&  &  & h_{k-1}
	\end{pmatrix}\quad \psi(D)=T:=\begin{pmatrix}
		& 1 &  & \cdots &  \\
		&  & 1 & \cdots &  \\
		\cdots & \cdots & \cdots & \cdots & \cdots \\
		&  &  & \cdots & 1 \\
		D^k &  &  & \cdots & 
	\end{pmatrix}
	$$
	with $\psi(D^l)=T^l$, and extended by linearity. The map $\psi$ can be obviously extended to 
	$$
\psi : \tilde{K}^{\oplus k}((\tilde{D}^{-1})) \hookrightarrow M_k(\tilde{K}((\tilde{D}^{-k}))).
	$$

Elements of the centralizer $C(\partial^k)$ embedded to $\mathfrak{B}\subset E_k$ via $\hat{\Phi}$ we'll call as a {\it vector form} presentation, and the same elements embedded to $M_k(\tilde{K}[D^k])$ via $\psi\circ \hat{\Phi}$ -- as a {\it matrix form} presentation. 

Translating the description of the centralizer $C(\partial^k)$  into the vector form, we get that $\hat{\Phi}(C(\partial^k))$ consists of Laurent polynomials in 
$\tilde{D}$ with coefficients from $K^{\oplus k}$ and with additional conditions: the coefficient $s_i$ at $\tilde{D}^{-i}$, $i>0$, has a shape $s_{i,j}=0$ for $j=0, \ldots , i-1$.

4. In section \ref{S:normal_forsm_for_non-commuting}, suppose $H$ is an operator whose components are all HCP. Then $E(H)$ denotes  the point set where  $f_{l,i;r}\neq 0$, $v_{\sigma,\rho}$ stands for the weight degree of $H$, and $f_{\sigma,\rho}$ for the highest terms  associated to $(\sigma,\rho)$:
	$$
	v_{\sigma,\rho}(H)=\sup\{\sigma l+\rho j|(l,j)\in E(H)\} \quad
	f_{\sigma,\rho}(H)=\sum_{(l,j)\in E(H,\sigma,\rho)}\sum_{i}f_{l,i;j}\Gamma_lA_{k,i}D^{j} 
	$$
	The {\it up-edge} of the Newton region of $P$ is the set
	$$
	Edg_u(P):=\{(a,b)\in E(P)| \quad a=Sdeg_A(P_b) \mbox{\quad and\quad }  \forall b'>b \quad Sdeg_A(P_{b'})< a  \}.
	$$
	And $H_{d;(\sigma,\rho)}(H), HS_{d;(\sigma,\rho)}^m(H)$ stands for 
	$$
	H_{d;(\sigma,\rho)}(L):=\sum_{\sigma l+\rho j\ge d}\sum_{i=0}^{k-1}\alpha_{l,i;j}\Gamma_lA_i\partial^j
	$$
	and 
	$$
	HS_{d;(\sigma,\rho)}^m(L):=\sum_{\sigma l+\rho j\ge d;l\leq m}\sum_{i=0}^{k-1}\alpha_{l,i;j}\Gamma_l\partial^j
	$$

\section{Normal forms for non-commuting operators}
\label{S:normal_forsm_for_non-commuting}

\subsection{A Newton Region of operators with the property $A_{q}(k)$}
\label{S:General_Newton}

Let $P,Q$ be a pair of monic differential operators from $D_1$. If $[P,Q]\neq 0$, it is useful to study the normal forms of $P$ with respect to $Q$ more carefully. The well known and useful technical tool -- the Newton polygon of a differential operator from the Weyl algebra - can be naturally defined in our situation and applied to such study. In this section we introduce the notion of a Newton region  --  a generalisation of the Newton Polygon, suitable for operators from $\hat{D}_1^{sym}$ satisfying conditions $A_{q}(k)$, and study its basic properties. In this paper they will be used for the proof of a commutativity criterion in section \ref{S:restriction top line}. Further study of the Newton region and of normal forms will be continued in subsequent works.  

In this section let's fix $k\in \dn$. Let $\xi$ be a $k$-th primitive root of $1$, $\tilde{K}=K[\xi ]$.

\begin{Def}
\label{D:NP(H)}
	Suppose $H\in \hat{D}_1^{sym}\hat{\otimes}_K \tilde{K}$ is a HCP from $Hcpc(k)$, $\Ord(H)=r$, written in the G-form:
$$
H=(\sum_{0\leq i<k}\sum_{0\leq l\leq d_i} f_{l,i;r}\Gamma_lA_i+\sum_{0<j\leq N}g_{j;r}B_{j})D^{r}
$$
We define the set $E(H):=\{(l,r)|\quad \exists i, f_{l,i;r}\neq 0 \}$ ($E(H)=\emptyset$ if all coefficients $f_{l,i;r}$ are equal to zero). 

Suppose now $H\in \hat{D}_1^{sym}$  is such that all homogeneous components $H_i$ are HCPs from $Hcpc(k)$ (for example, $H$ satisfies condition $A_k(q)$). We define the {\it Newton region} $NR(H)$ as the convex hull of the union $E(H):=\cup_i E(H_i)$ (i.e. the region can be unbounded). 

We'll say that the point $(a,b)\in E(H_b)\subseteq E(H)$ {\it does not contain $A_i$} if the coefficients $f_{a,i;b}$ of the G-form of $H_b$ satisfy the following property: $f_{a,i;b}=0$ for $i>0$. 

We'll call HCP of the form $f_{l,i;r}\Gamma_lA_iD^r$ or $g_{j;r}B_{j}D^{r}$ as {\it monomials} (of $H$). We'll call HCP of the form $f_{l,i;r}\Gamma_lA_iD^r$ as {\it monomials corresponding to the point } $(l,r)$. 
\end{Def}

\begin{rem}
\label{R:NP(H)}
This definition slightly differs from the well known definition of the Newton polygon of an operator from the Fist Weyl Algebra $A_1$, since the points of the Newton region belong to the $XY$-plane where the $X$-axis stand now for powers of $x\partial$ (hence $X$ equals to $Sdeg_A$), and the $Y$-axis stand for the homogeneous order $\Ord$. Notice that the Newton Polygon of a HCP $H$ will belong to the line $Y=\Ord(H)$. 
\end{rem}

\begin{Def}
\label{D:homogeneous (highest) term}
Suppose  $H\in \hat{D}_1^{sym}\hat{\otimes}_K \tilde{K}$ is such that all homogeneous components $H_i$ are HCPs from $Hcpc(k)$ (for example, $H$ satisfies condition $A_k(q)$). For a real pair $(\sigma,\rho)$ with $\sigma\ge 0$, $\rho >0$ we define: 
	$$
	v_{\sigma,\rho}(H)=\sup\{\sigma l+\rho j|(l,j)\in E(H)\}, \quad 
		E(H,\sigma,\rho)=\{(l,j)\in E(H)|v_{\sigma,\rho}(H)=\sigma l +\rho j \} ,
	$$ 
where we define $v_{\sigma,\rho}(H):=-\infty$ if $E(H)=\emptyset$, and $E(H,\sigma,\rho):=\emptyset$ if $v_{\sigma,\rho}(H)=\infty$ (note that the set $E(H,\sigma,\rho)$ can be empty also if $v_{\sigma,\rho}(H)<\infty$).

	If $E(H,\sigma,\rho)\neq \emptyset$, we define the operator 
		$$
		f_{\sigma,\rho}(H)=\sum_{(l,j)\in E(H,\sigma,\rho)}\sum_{i}f_{l,i;j}\Gamma_lA_{k,i}D^{j} 
		$$
		which is called the {\it homogeneous (highest) term of $H$ associated to} $(\sigma,\rho)$, and the line $l_0:\sigma X+\rho Y=v_{\sigma,\rho}(H)$ is called the {\it $(\sigma,\rho)$-top line}. 
		
		If $E(H,\sigma,\rho)= \emptyset$, we define $f_{\sigma,\rho}(H):=0$. 
\end{Def}

\begin{rem}
	In the following discussion the top line (of a monic operator) will usually go across some  vertex $(0,p)$.
	
	Note  that immediately from definition it follows that 
	$$v_{\sigma,\rho}(H)=\sup_{j\in \sdz}\{\sigma Sdeg_A(H_j)+\rho j \} .$$
	In particular, if $H$ satisfies condition $A_k(0)$, then there exists $(\sigma ,\rho )$ with $\sigma >0$ such that $v_{\sigma,\rho}(H)< \infty$ (e.g. $(1,1)$). 
\end{rem}

The specific basic properties of the Newton region somewhat similar to analogous properties of the Newton polygons from the paper \cite{Dixmier} are collected in the Appendix.

Further we'll need several  statements about the top lines of operators satisfying conditions $A_k(0)$.

\begin{Def}
\label{D:restriction top line}
	Suppose $P\in \hat{D}_1^{sym}\hat{\otimes}_K \tilde{K}$ satisfies condition $A_k(0)$, $\Ord(P)=p$.  A $(\sigma ,\rho )$-top line  which goes across $(0,p)\in E(P)$ and contains at least two vertices is called {\it a restriction top line of $NR(P)$}.
\end{Def}

\begin{rem}
\label{R:restriction top line}
The restriction top line is uniquely defined if it exists. To show this first note that any real pair $(\sigma ,\rho )$ with $\sigma\ge 0$, $\rho >0$ is proportional to some pair $(\tilde{\sigma} ,1)$, and we can consider only such pairs without loss of generality. 

If $(\sigma ,1)$-top line is a restriction top line, then it contains the vertex $(0,p)$ and another vertex, say $(l,j)$, with $j<p$, and $\sigma l+j=p$. If $\sigma'>\sigma$, then  $(\sigma' ,1)$-top line can not be a restriction top line, because $\sigma' l+j>\sigma l+j=p$, i.e. it can not go across $(0,p)$.  Thus, there exists only one pair $(\sigma ,1)$ such that $(\sigma ,1)$-top line is a restriction top line.

As we have noted before, the restriction top line is no longer a trivial notion. Since $Sdeg_A$ might go to infinity, an operator may not have restriction top line at all.
\end{rem}

\begin{Def}
\label{D:asymptotic top line}
	Suppose $P\in \hat{D}_1^{sym}\hat{\otimes}_K \tilde{K}$ satisfies condition $A_k(0)$, $\Ord(P)=p$. If $P$ doesn't have the restriction top line but there exists a top line $l_0:\sigma_0 X+Y=p$, $\sigma_0>0$, such that for any $\sigma>\sigma_0$ the line $l:\sigma X+Y =p$ is not the top line of $N(P)$, we call this top line $l_0$ as {\it the asymptotic top line}.
\end{Def}

For the next lemma we extend the definition of the function $Sdeg_A$ to operators satisfying condition $A_k(0)$ in an obvious way: $Sdeg_A(P):=\sup_{i\in \sdz} Sdeg_A(P_i)$. Of course, for a generic operator $Sdeg_A(P)=\infty$. 

\begin{lemma}
	\label{L: res or asym}
	Suppose $P\in \hat{D}_1^{sym}\hat{\otimes}_K \tilde{K}$ satisfies condition $A_k(0)$, $\Ord(P)=p$.
	 Then only one of the following conditions holds:
	\begin{enumerate}
		\item $Sdeg_A(P)=0$.
		\item $Sdeg_A(P)>0$, and $P$ has the restriction top line.
		\item $Sdeg_A(P)>0$, and $P$ has the asymptotic top line.  
	\end{enumerate}
	In particular, the asymptotic top line is uniquely defined if it exists.
\end{lemma}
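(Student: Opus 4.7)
The plan is to reduce the trichotomy, and the ensuing uniqueness claim, to an analysis of the single invariant
\[
\sigma^{\ast} := \inf \bigl\{ (p-j)/l : (l,j) \in E(P),\ l > 0 \bigr\} \in (1, +\infty],
\]
with the convention $\inf \emptyset = +\infty$, together with its "attained vs.\ not attained" dichotomy. First I would extract from condition $A_k(0)$ the two geometric facts the whole argument rests on. Because $\sigma(P)=P_p$ has $Sdeg_A=0$, is totally free of $B_j$, and contains no nontrivial $A_{k;i}$, it collapses to $f_{0,0;p}D^p$ with $f_{0,0;p}\neq 0$, so $(0,p)\in E(P)$. The inequality $Sdeg_A(P_{p-i})<i$ for $i>0$ says precisely that every $(l,j)\in E(P)$ with $l>0$ satisfies $l+j<p$, whence $(p-j)/l\ge (l+1)/l>1$; in particular $\sigma^{\ast}\ge 1$.

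The key (entirely formal) computation is the reformulation, valid for every $\sigma>0$,
\[
v_{\sigma,1}(P)=p \ \Longleftrightarrow\ \sigma\le\sigma^{\ast}, \qquad v_{\sigma,1}(P)>p \ \Longleftrightarrow\ \sigma>\sigma^{\ast},
\]
immediate from $\sigma l+j\le p\Longleftrightarrow \sigma\le(p-j)/l$ together with the presence of $(0,p)\in E(P)$. The three cases of the lemma now correspond to the three possibilities for $\sigma^{\ast}$. If $\sigma^{\ast}=+\infty$, no vertex has $l>0$ and so $Sdeg_A(P)=0$; every line $\sigma X+Y=p$ is a top line but only $(0,p)$ lies on it, excluding a restriction top line (needs two vertices) and also an asymptotic one (since $\sigma X+Y=p$ persists as a top line for every $\sigma>\sigma_0$, violating the defining clause). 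If $\sigma^{\ast}<\infty$ and the infimum is attained at some $(l^{\ast},j^{\ast})$, the $(\sigma^{\ast},1)$-top line contains both $(0,p)$ and $(l^{\ast},j^{\ast})$, providing a restriction top line (case 2). If $\sigma^{\ast}<\infty$ and the infimum is not attained, then no value of $\sigma$ yields a $(\sigma,1)$-top line carrying two vertices (for $\sigma<\sigma^{\ast}$ all vertices with $l>0$ sit strictly below, and $\sigma=\sigma^{\ast}$ does likewise since no $(p-j)/l$ equals $\sigma^{\ast}$), so no restriction top line exists; yet $\sigma^{\ast}X+Y=p$ is itself a top line, while $\sigma X+Y=p$ fails to be a top line for every $\sigma>\sigma^{\ast}$, exhibiting an asymptotic top line (case 3).

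Mutual exclusivity is built into the construction: (1) is distinguished from (2) and (3) by $Sdeg_A(P)=0$, and (2) excludes (3) because Definition \ref{D:asymptotic top line} forbids an asymptotic top line when a restriction one is present. The uniqueness of the asymptotic top line drops out of the same equivalence: if $\sigma_0 X+Y=p$ is asymptotic, then $v_{\sigma_0,1}(P)=p$ forces $\sigma_0\le\sigma^{\ast}$ while the "$\sigma>\sigma_0$" clause forces $\sigma_0\ge\sigma^{\ast}$, so $\sigma_0=\sigma^{\ast}$. I do not foresee a genuine obstacle here; the only point I would treat carefully while writing out the proof is the attained vs.\ non-attained alternative for $\sigma^{\ast}$, which (because $E(P)$ may be infinite) is not automatic and is precisely what separates the restriction top line from the asymptotic one.
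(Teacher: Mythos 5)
Your proof is correct and takes essentially the same route as the paper: both identify the critical slope $\sigma^{\ast}$ (the paper's $\sigma_0$, evidently intended as a $\sup$ rather than an $\inf$) as the supremum of those $\sigma$ for which the $(\sigma,1)$-top line still passes through $(0,p)$, and then trichotomize according to whether $Sdeg_A(P)=0$ and whether the critical line carries a second vertex. Your phrasing via the directly computed infimum of the slopes $(p-j)/l$ and its attained-versus-not-attained dichotomy simply makes explicit what the paper compresses into ``if there are more than one vertex on this line.''
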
  

\begin{proof}
Suppose $Sdeg_A(P)=0$. Then for any pair $(\sigma ,\rho )$ with $\sigma \ge 0$, $\rho >0$ we have $v_{\sigma ,\rho }(P)=\rho p$, and then, clearly, any $(\sigma ,\rho )$-top line is not the restriction top line and not an asymptotic top line, because the set $E(P)$ lies on the line $X=0$. 

	Suppose $Sdeg_A(P)>0$. Then, since $P$ satisfies condition $A_k(0)$, the line $l:X+Y=p$ is the $(1,1)$-top line of $P$. Put
	$$
	\sigma_0=\inf\{\sigma|\quad \sigma X+Y=p\quad  \text{is the $(\sigma , 1)$-top line of $P$} \}\ge 1.
	$$
	It is well-defined (finite) since $Sdeg_A(P)>0$. Now consider the line $l_0:\sigma_0 X+Y=p$. If there are more than one vertex on this line, then this line is the restriction top line, and if there is only one point $(0,p)$, then it is the asymptotic top line.
\end{proof}

\begin{ex}
\label{Ex:illustration}

Suppose $P\in \hat{D}_1^{sym}\hat{\otimes}_K \tilde{K}$, $\Ord (P)=p$, satisfies condition $A_k(0)$ and \\ $Sdeg_A(P_{\Ord (P)-i})=i-1$ for all $i>0$ (such condition holds for an operator $P'$ from \cite[Cor.2.4]{GZ24}, which comes from  a {\it generic} pair of operators $P,Q\in D_1$). 

Then it's easy to see that $P$ doesn't have the restriction top line, but the top line $l_0:X+Y=p$ is the asymptotic line.
\end{ex}

\begin{Def}
\label{D:up-edge}
Suppose $P\in \hat{D}_1^{sym}\hat{\otimes}_K \tilde{K}$ satisfies condition $A_k(0)$. We define {\it the up-edge} of the Newton region of $P$ as the set
	$$
	Edg_u(P):=\{(a,b)\in E(P)| \quad a=Sdeg_A(P_b) \mbox{\quad and\quad }  \forall b'>b \quad Sdeg_A(P_{b'})< a  \}.
	$$
\end{Def}

\begin{figure}[!h]
	\label{F:Poly P}
	\centering
	\begin{tikzpicture}
		\draw[line width=1pt, ->](0,0) -- (6,0) node[right] {$Sdeg_A$};
		\draw[line width=1pt, ->](0,0)  -- (0,5) node[above] {$\Ord$};
		\filldraw[black](2,1.5) circle(2pt);
		\filldraw[black](1,3) circle(2pt);
		\filldraw[black](0.5,3) circle(2pt);
		\filldraw[black](0,4.5) circle(2pt);
		\filldraw[black](1.5,2.25) circle(2pt);
		\filldraw[black](2.5,1) circle(2pt);
		\filldraw[black](2.25,1) circle(2pt);
		\draw[dashed](0,4.5) node[left] {$(0,p)$} -- (3.5,0);
		\draw[line width=1pt , red](0,3)  -- (1,3);
		\draw[line width=1pt , red](1,3)  -- (1,2.25);
		\draw[line width=1pt , red](1.5,2.25)  -- (1,2.25);
		\draw[line width=1pt , red](1.5,2.25)  -- (1.5,1.5);
		\draw[line width=1pt , red](1.5,1.5)  -- (2,1.5);
		\draw[line width=1pt , red](2,1.5)  -- (2,1);
		\draw[line width=1pt , red](2,1)  -- (2.5,1);
		\draw[line width=1pt , red](2.5,1)  -- (2.5,0);
	\end{tikzpicture}
	\caption{The up-edge of $P$ with the asymptotic line across $(0,p)$}
\end{figure}
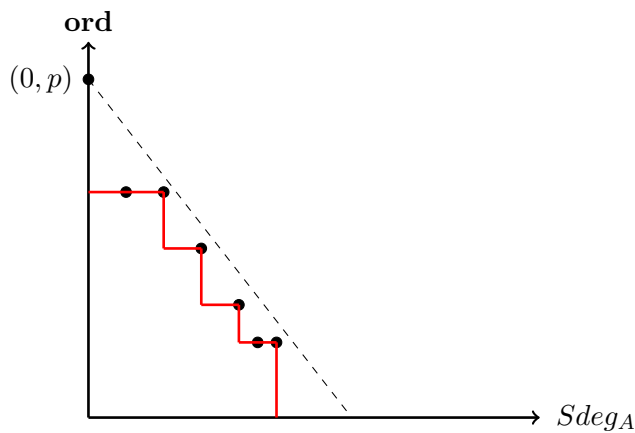

\begin{lemma}
\label{L:up-edge}
Suppose $Q\in D_1$ is a monic operator with  $\Ord (Q)=\deg (Q)=q=k> 0$. 
	Suppose $P\in {D}_1$ has constant highest symbol (cf. \cite[Th. 2.2]{GZ24}), $\Ord (P)=\deg (P)=p$. 
	 Put $P'=S^{-1}PS$, where $S$ is a Schur operator for $Q$ (cf. \cite[Prop. 2.5]{GZ24}). Suppose $(a,b)\in Edg_u(P')$. 
	
	Then the point $(a,b)$  doesn't contain $A_i$.
\end{lemma}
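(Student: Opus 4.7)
The plan is a descending induction on the order coordinate $b$, exploiting the identity $PS=SP'$ together with the fact that $P\in D_1$ contains no factor $A_i$ with $i>0$.

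For the base case $b=p$, I would argue as follows: since $P$ has constant highest symbol $c\partial^p$ and the Schur operator $S$ is an order-zero invertible element whose conjugation preserves the highest symbol (cf. \cite[Prop. 2.5]{GZ24}), we get $\sigma(P')=c\partial^p$. Hence $Sdeg_A(P'_p)=0$, the unique up-edge point at order $p$ is $(0,p)$, and it trivially contains no $A_i$ for $i>0$. This also matches the requirement that $P'$ satisfies condition $A_k(0)$.

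For the inductive step, I would fix $(a,b)\in Edg_u(P')$ with $b<p$ and assume the claim for all up-edge points of higher order. Passing to the embedding $\hat\Phi$ into $E_k$, I would extract the coefficient of the monomial $\Gamma_a A_i D^b$ on both sides of $PS=SP'$, for each $i\in\{1,\ldots,k-1\}$. On the left-hand side, since $P\in D_1$ has no factor $A_j$ with $j>0$, every occurrence of $A_i$ in the product $PS$ must be generated by $S$. On the right-hand side, the up-edge condition $Sdeg_A(P'_{b'})<a$ for every $b'>b$ forces any contribution to $\Gamma$-degree $a$ at order $b$ coming from $P'_{b'}$ with $b'>b$ to ``borrow'' the missing $\Gamma$-degree from $S$; the commutation relations in $E_k$ recorded in the list of notations then show that such contributions either vanish outright or are already controlled by the inductive hypothesis (the relevant pieces at higher orders either sit at $\Gamma$-degrees strictly less than the up-edge height there, or are up-edge points themselves, hence $A_i$-free). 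What remains is a single linear equation in $f_{a,i;b}$, which, balanced against the vanishing of $A_i$-components on the left, yields $f_{a,i;b}=0$.

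The main obstacle will be the careful bookkeeping: one must verify rigorously that the up-edge ``staircase'' condition truly isolates $f_{a,i;b}$ after applying the commutation relations between $A_j$, $\Gamma_l$, and $\tilde D$ in $E_k$. In particular, the danger is that non-up-edge components $P'_{b'}$ of lower $\Gamma$-degree could, in combination with $S$, generate a spurious $A_i$-term of $\Gamma$-degree $a$ at order $b$ that is neither controlled by the inductive hypothesis nor cancels automatically. Ruling this out will require the explicit shape of the Schur operator from \cite[Prop. 2.5]{GZ24}, together with the appendix's precise statements about the weight function $v_{\sigma,\rho}$ and the homogeneous highest terms $f_{\sigma,\rho}$, which track exactly how $\Gamma$- and $A$-degrees propagate through products in $\widehat{Hcpc}_B(k)$.
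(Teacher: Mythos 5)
The gap you flagged at the end is indeed fatal, and your overall plan does not reach the statement. The obstruction is that the Schur operator $S$ (and hence $S^{-1}$) is \emph{not} a differential operator: its homogeneous components $S_{-j}$ for $j>0$ genuinely contain $A_{q;i}$-terms, and their $\Gamma$-degrees $Sdeg_A(S_{-j})$ are not bounded in any way useful to you. Consequently, when you extract the coefficient of $\Gamma_a A_i D^b$ from $SP' = PS$, the left side is a mixture $\sum_{j\ge0} S_{-j}P'_{b+j}$ in which $S_{-j}$ can supply both missing $\Gamma$-degree and $A_i$-content, so the up-edge bound $Sdeg_A(P'_{b+j})<a$ for $j>0$ does \emph{not} isolate the $(a,b)$-coefficient. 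Your inductive hypothesis only covers up-edge points, but the contributing pieces of $P'_{b+j}$ lie at arbitrary $\Gamma$-degrees $<a$; nothing controls their $A_i$-content. On the right side the same difficulty appears: $P$ has no $A_i$, but $S$ does, so $(PS)_b$ is not $A_i$-free either, and you end up with one equation carrying many unknown $S$-coefficients on both sides, rather than a clean relation for $f_{a,i;b}$.

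The paper sidesteps the whole bookkeeping problem with a direct argument (no induction). Set $\tilde P := (\mathrm{ad}\,\partial^q)^a(P')$. On monomials of $P'$, $\mathrm{ad}\,\partial^q$ acts as the forward difference in $\Gamma_1$ with step $q$ and raises $\Ord$ by $q$; since $\partial^q$ commutes with all $A_{q;i}$, it never creates or destroys $A_i$-content. The up-edge condition $Sdeg_A(P'_{b'})<a$ for $b'>b$ and $Sdeg_A(P'_b)=a$ then force $\Ord(\tilde P)=b+qa$, and the top component $\tilde P_{b+qa}$ equals $\lambda\,t\,\partial^{b+qa}$ with $\lambda=a!\,q^a\ne0$, where $t=\sum_i t_i A_{q;i}$ is exactly the $\Gamma_a$-coefficient of $P'_b$. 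On the other hand $\tilde P = S^{-1}(\mathrm{ad}\,Q)^a(P)\,S$, so $\bar P := S\tilde P S^{-1} = (\mathrm{ad}\,Q)^a(P)\in D_1$, and since $S_0=(S^{-1})_0=1$ the top homogeneous components agree: $\bar P_{b+qa} = \tilde P_{b+qa}$. Thus $\lambda t\,\partial^{b+qa}\in D_1$, and by \cite[Lemma 2.1]{GZ24} the coefficient of a homogeneous component of a differential operator contains no $A_{q;i}$ with $i>0$; hence $t_i=0$ for $i>0$. This is the crucial idea missing from your proposal: apply $(\mathrm{ad}\,\partial^q)^a$ to strip the $\Gamma$-degree and promote the point $(a,b)$ to the \emph{highest symbol}, where conjugation by $S$ is harmless, rather than trying to read off an interior coefficient of $SP'=PS$.
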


\begin{proof}
By \cite[Cor. 2.4, Th.2.2]{GZ24}  the operator $P'$ satisfies condition $A_q(0)$.

	Suppose $(a,b)\in Edg_u(P')$, and the coefficient at $\Gamma_aD^b$ of the G-form of $P_b'$ is $t=\sum t_iA_{q;i}$, $t_i\in \tilde{K}$. Consider the operator 
	$$
	\tilde{P}:=(ad\partial^q)^a(P')
	$$
	Since $Sdeg_A(P_j')< a$ for all $j>b$, $Sdeg_A(P_b')=a$, and $\partial^q$ commutes with all $A_{q;i}$, we have  $\Ord (\tilde{P})=b+qa$. Besides, $Sdeg_A(\tilde{P}_{b+qa})=0$ and $\tilde{P}_{b+qa}=\lambda t$, $\lambda \in \dq$. 
		
	On the other hand, we know 
	$$
	S^{-1}(ad (Q))^a(P)S=(ad(S^{-1}QS))^a(S^{-1}PS)=\tilde{P},
	$$
	hence we know $\bar{P}:=S\tilde{P}S^{-1}\in D_1$, and $\Ord (\bar{P}=\Ord (\tilde{P})$. Since $S_0=(S^{-1})_0=1$, we get $\lambda t=\bar{P}_{b+qa}=\tilde{P}_{b+qa}\in D_1$. But then by \cite[Lemma 2.1]{GZ24} $t_i=0$ for all $i>0$, i.e. $(a,b)$ does not contain $A_i$. 
\end{proof}

Just noting that the points on the $(\sigma,\rho)$-top line will be in $Edg_u(P')$ when $\sigma,\rho>0$, we have the following Corollary.

\begin{cor}
	\label{C:No Ai on top line}
	In the notation of lemma \ref{L:up-edge} suppose  $\sigma,\rho >0$.  Then the points  on the  $(\sigma,\rho)$-top line don't contain $A_i$. 
	
	In particular, if $P'$ has the restriction top line, then the points on it don't contain $A_i$.
\end{cor}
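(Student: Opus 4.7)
The plan is to reduce immediately to Lemma \ref{L:up-edge} by verifying the observation spelled out just before the statement: every point $(a,b)\in E(P',\sigma,\rho)$ on the $(\sigma,\rho)$-top line lies in $Edg_u(P')$ whenever $\sigma,\rho>0$. Once this inclusion is established, Lemma \ref{L:up-edge} gives that $(a,b)$ does not contain $A_i$, which is exactly the first claim of the corollary.

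To check the inclusion, I would fix $(a,b)\in E(P',\sigma,\rho)$, so that $\sigma a+\rho b=v_{\sigma,\rho}(P')$. First, since $\sigma>0$, no point $(a',b)\in E(P')$ with $a'>a$ can exist, because it would give $\sigma a'+\rho b>v_{\sigma,\rho}(P')$; hence $a=Sdeg_A(P'_b)$. Second, for any $b'>b$, any $(a',b')\in E(P')$ must satisfy $\sigma a'+\rho b'\le v_{\sigma,\rho}(P')=\sigma a+\rho b$, and since $\rho>0$ this forces $a'<a$, i.e.\ $Sdeg_A(P'_{b'})<a$. Both conditions defining $Edg_u(P')$ are therefore met, so $(a,b)\in Edg_u(P')$, as required.

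For the ``in particular'' statement, the only extra point to verify is that a restriction top line necessarily has $\sigma>0$. By \cite[Cor.~2.4, Th.~2.2]{GZ24} the operator $P'$ satisfies condition $A_q(0)$, so $Sdeg_A(\sigma(P'))=0$ and $(0,p)$ is the only vertex of $E(P')$ lying on the horizontal line $Y=p$. Were the $(\sigma,\rho)$-top line horizontal (i.e.\ $\sigma=0$), it could not contain a second vertex and so would fail to be a restriction top line (cf.\ Remark \ref{R:restriction top line}). Hence $\sigma>0$ and $\rho>0$, and the already-proved first half of the corollary applies. No real obstacle is expected here: the entire proof is essentially bookkeeping, with the substantive input being Lemma \ref{L:up-edge}.
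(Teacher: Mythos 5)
Your proof is correct and follows exactly the route the paper takes: the paper simply states, just before the corollary, that the points on the $(\sigma,\rho)$-top line lie in $Edg_u(P')$ when $\sigma,\rho>0$, and then cites Lemma~\ref{L:up-edge}. You have merely spelled out that inclusion (using $\sigma>0$ for the first condition of $Edg_u$ and $\rho>0$ together with $\sigma>0$ for the second) and supplied the small extra observation that a restriction top line cannot have $\sigma=0$ because $(0,p)$ is the only vertex at height $p$ under condition $A_q(0)$.
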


\subsection{One combinatorial lemma}
\label{S:Key combinatorial lemma}

Suppose $A$ is an associative algebra over $K$, $D,L\in A$ are two non-zero elements. Denote by $L^{(0)}:=L$, $L^{(1)}:=[D,L]=adD(L), \ldots, L^{(n)}=(ad(D))^n(L)$. For any $k\in \dn$ the element $(D+L)^k$ can be written in the form (which we'll call the {\it standard form}), where all $L^{(t)}$ stand on the left hand side of powers of $D$:
$$
(D+L)^k=\sum c_{k;t_1,\ldots ,t_m,l} L^{(t_1)}L^{(t_2)}\cdots L^{(t_m)}D^{l}
$$
where $c_{k;t_1,\ldots ,t_m,l}\in K$ are some constant  coefficients, and $m,l,t_i\in \dz_+$. Our task in this section is to determine  such sum form and the coefficients $c_{k;t_1,\ldots ,t_m,l}$ at each position.

Denote by $L^{(t_1,\ldots,t_m)}:=L^{(t_1)}L^{(t_2)}\cdots L^{(t_m)}$, and put  $L^{(t_1,\ldots,t_m)}=1$ if $m=0$.  We'll call  the index $m$ as the {\it multiple index}, and define the {\it partial degree} of $L^{(t_1,\ldots,t_m)}$ as
$$
Pdeg(L^{(t_1,\ldots,t_m)})=t_1+t_2+\ldots+t_m. 
$$
 It is easy to observe that the coefficient at  $D^k$ in $(D+L)^k$ is $1$ so that it's multiple index is 0, but except for $D^k$, the other terms have multiple index more than 0. Denote by $T_{i,j,k}$ the sum of monomials from the coefficient of $D^{k-i} (i>0)$  in $(D+L)^k$ with partial degree $Pdeg(L^{(t_1,\ldots,t_m)})=j\ge 0$.
 
\begin{lemma}{(Combinatorial)}
\label{L:Key combinatorial}
We have 
\begin{equation}
\label{E:Key_comb}
(D+L)^k=D^k+\sum_{i=1}^k\sum_{j=0}^{i-1}T_{i,j,k}D^{k-i},
\end{equation}
where every monomial in $T_{i,j,k}$  has multiple index $m=i-j$, i.e.
	$$
	T_{i,j,k}=\sum_{t_1+\ldots + t_m=j \atop m=i-j} f_{i,j,k}(t_1,\dots,t_m)L^{(t_1,\dots,t_m)}, 
	$$
	where 
	$$
	f_{i,j,k}(t_1,\dots,t_m)=\binom{k}{i}g(t_1,\dots,t_m),
	$$
	where the function $g$ is defined by recursion:
	\begin{enumerate}
		\item For $m=1$  $g(t_1)\equiv 1$.
		\item For any $m$ with $t_1=\ldots=t_m=0$  $g(t_1,\cdots,t_m)=1$.
		\item For $m>1$, when $t_1=0$:
		$$
		g(0,t_2,\ldots,t_m)=g(t_2,\ldots,t_m)+g(0,t_2-1,\ldots,t_m)+\ldots+g(0,t_2,\ldots,t_m-1)
		$$
		\item For $m>1$, when $t_1\ge 1$:
		$$
		g(t_1,t_2,\ldots,t_m)=g(t_1-1,t_2,\ldots,t_m)+g(t_1,t_2-1,\ldots,t_m)+\ldots+g(t_1,t_2,\ldots,t_m-1),
		$$
	\end{enumerate}
	and we assume that $g(t_1,t_2,\ldots,t_m)=0$ if  $t_i<0$ for at least one $i$.
\end{lemma}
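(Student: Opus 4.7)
I will prove the formula by induction on $k$. The base case $k=1$ gives $(D+L)^1=D+L^{(0)}$, matching the claim with $T_{1,0,1}=L^{(0)}$ and $f_{1,0,1}(0)=\binom{1}{1}g(0)=1$ (using condition 1 of the recursion for $g$).

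For the inductive step, I write $(D+L)^{k+1}=(D+L)\,(D+L)^k$ and substitute the formula \eqref{E:Key_comb} for $(D+L)^k$. The key technical tool to bring the product back to standard form is the Leibniz-type commutation rule, obtained by iterating $[D,L^{(t)}]=L^{(t+1)}$:
$$
D\, L^{(t_1,\ldots,t_m)} = L^{(t_1,\ldots,t_m)}\, D + \sum_{r=1}^m L^{(t_1,\ldots,t_r+1,\ldots,t_m)}.
$$

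I then fix an arbitrary standard monomial $L^{(s_1,\ldots,s_m)}D^{k+1-i}$ with $i=m+\sum_{r}s_r$ and enumerate its sources on the right-hand side: (a) left-multiplication by $L=L^{(0)}$ applied to $L^{(s_2,\ldots,s_m)}D^{k+1-i}$, which contributes only when $s_1=0$; (b) left-multiplication by $D$ applied to $L^{(s_1,\ldots,s_m)}D^{k-i}$, with the new $D$ remaining as an outer factor; (c) left-multiplication by $D$ applied to $L^{(s_1,\ldots,s_r-1,\ldots,s_m)}D^{k+1-i}$ with the $D$ commuting into the $r$-th slot, summed over all $r$ with $s_r>0$. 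Applying the inductive hypothesis, these three sources contribute $\binom{k}{i-1}g(s_2,\ldots,s_m)$ (in case $s_1=0$), $\binom{k}{i}g(s_1,\ldots,s_m)$, and $\binom{k}{i-1}\sum_{r:\,s_r>0}g(s_1,\ldots,s_r-1,\ldots,s_m)$, respectively. Pascal's identity $\binom{k+1}{i}=\binom{k}{i}+\binom{k}{i-1}$ then reduces the required equality $\binom{k+1}{i}g(s_1,\ldots,s_m)$ to exactly the recursion defining $g$: the case $s_1=0$ is condition 3, the case $s_1\ge 1$ is condition 4, and the boundary cases ($m=1$, or all $s_r=0$) are conditions 1 and 2. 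The identities $i\le k+1$ and $j=\sum s_r\le i-1$ (since $m\ge 1$) automatically give the stated summation range.

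The proof is essentially a careful bookkeeping exercise; the only subtle point is the split between $s_1=0$ and $s_1\ge 1$, which arises because an $L^{(0)}$ in the leftmost slot can only be produced by an outer $L$ multiplication (commutators strictly increase an index). This dichotomy is precisely what produces the two recursion branches for $g$ and constitutes the main combinatorial content of the lemma.
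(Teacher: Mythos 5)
Your proof is correct and uses essentially the same approach as the paper: induction on $k$, expanding $(D+L)^{k+1}=(D+L)(D+L)^{k}$ into the three contributions (new $D$ passing to the right, new $D$ commuting into a slot, new $L$ appended on the left) and matching coefficients via Pascal's rule together with the recursion for $g$. The paper organizes the bookkeeping by deriving recursions for the $T_{i,j,k}$ blocks, while you fix a target monomial $L^{(s_1,\dots,s_m)}D^{k+1-i}$ and enumerate its sources; this is the same computation read in the opposite direction.
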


\begin{proof} The proof is by induction on $k$. When $k=1$, $(D+L)^k=D+L$, and it's easy to see that $T_{1,0,1}$ satisfies all conditions in the lemma. Now suppose it is true for $k-1$, consider the generic case.  Note that 
$$(D+L)^k=(D+L)(D+L)^{k-1}=(D+L)^{k-1}D+[D,(D+L)^{k-1}]+L(D+L)^{k-1},$$ 
so that all three summands are written in standard form. By induction we have 
\begin{multline}
\label{E:formula}
(D+L)^{k-1}D+[D,(D+L)^{k-1}]+L(D+L)^{k-1}=\\
D^k+ \sum_{i=1}^{k-1}\sum_{j=0}^{i-1}T_{i,j,k-1}D^{k-i} +
\sum_{i=1}^{k-1}\sum_{j=0}^{i-1}[D,T_{i,j,k-1}]D^{k-1-i}+
LD^{k-1}+\sum_{i=1}^{k-1}\sum_{j=0}^{i-1}LT_{i,j,k-1}D^{k-1-i}. 
\end{multline}
Note that for any $t_1,\ldots ,t_m$ we have $[D,L^{(t_1,\dots,t_m)}]= L^{(t_1+1,\dots,t_m)}+\ldots + L^{(t_1,\dots,t_m+1)}$, where all monomials have multiple index $m$, and  $[D,T_{i,j,k-1}]\in T_{i+1,j+1,k}$. Analogously, $T_{i,j,k-1}\in T_{i,j,k}$ and $LT_{i,j,k-1}\in T_{i+1,j,k}$, where the multiple index of $LT_{i,j,k-1}$ is $i-j+1$.  So, all monomials of $T_{i,j,k}$ (for arbitrary $i,j,k$) have the multiple index $i-j$ as claimed, and therefore 
$$
	T_{i,j,k}=\sum_{t_1+\ldots + t_m=j \atop m=i-j} f_{i,j,k}(t_1,\dots,t_m)L^{(t_1,\dots,t_m)}, 
	$$
	for some $f_{i,j,k}(t_1,\dots,t_m)\in K$. Let's calculate $T_{i,j,k}$ explicitly. We can rewrite formula \eqref{E:formula} as
\begin{multline*}
D^k+\sum_{s=1}^{k-1} \sum_{j=0}^{s-1}T_{s,j,k-1}D^{k-s}+\sum_{s=2}^k\sum_{j=0}^{s-2}[D,T_{s-1,j,k-1}]D^{k-s}+LD^{k-1}+\sum_{s=2}^k\sum_{j=0}^{s-2}LT_{s-1,j,k-1}D^{k-s}\\
=D^k + (T_{1,0,k-1}+L)D^{k-1}+ \sum_{s=2}^{k-1}(\sum_{j=0}^{s-2}(T_{s,j,k-1}+[D,T_{s-1,j,k-1}]+LT_{s-1,j,k-1})+T_{s,s-1,k-1})D^{k-s}+\\
\sum_{j=0}^{k-2}([D,T_{k-1,j,k-1}]+LT_{k-1,j,k-1}),
\end{multline*}	 
whence 	we get
\begin{equation}
\label{E:T_10k}
T_{1,0,k}=T_{1,0,k-1}+L=\binom{k}{1}L,
\end{equation}
\begin{equation}
\label{E:T_sjk}
\mbox{for $1<s<k$} \quad T_{s,j,k}=
\begin{cases}
T_{s,j,k-1}+LT_{s-1,j,k-1} & j=0\\
T_{s,j,k-1}+[D,T_{s-1,j-1,k-1}]+LT_{s-1,j,k-1} & 0<j<s-1,\\
[D,T_{s-1,s-2,k-1}]+T_{s,s-1,k-1} & j=s-1
\end{cases}
\end{equation}
\begin{equation}
\label{E:T_kjk}
T_{k,j,k}=
\begin{cases}
LT_{k-1,0,k-1} & j=0\\
LT_{k-1,j,k-1} + [D,T_{k-1,j-1,k-1}] & 0<j<k-1 .\\
[D,T_{k-1,k-2,k-1}] & j=k-1
\end{cases}
\end{equation}	

Now for $j=0$ and $1<s<k$ we get
$$
T_{s,0,k}=\binom{k-1}{s}L^{(0,\ldots ,0)}+\binom{k-1}{s-1}L^{(0,\ldots ,0)}=\binom{k}{s}L^{(0,\ldots ,0)}
$$
as claimed, and for $s=k$ we also get $T_{k,0,k}=L^{(0,\ldots ,0)}$ as claimed. 

For generic $s,j$ we have 
\begin{multline*}
[D,T_{s-1,j-1,k-1}]= \sum_{t_1+\ldots + t_m=j-1\atop m=s-j}\binom{k-1}{s-1}g(t_1, \ldots ,t_m)(L^{(t_1+1,\ldots ,t_m)}+\ldots + L^{(t_1,\ldots , t_m+1)}) =\\
\sum_{t_1'+\ldots + t_m'=j\atop m=s-j} \binom{k-1}{s-1}(g(t_1'-1,t_2', \ldots ,t_m')+\ldots + g(t_1',\ldots, t_{m-1}',t_m'-1))L^{(t_1',\ldots , t_m')} =\\
\sum_{t_1'+\ldots + t_m'=j\atop t_1'\ge 1, m=s-j}\binom{k-1}{s-1} g(t_1',t_2', \ldots ,t_m') L^{(t_1', \ldots ,t_m')} + 
\sum_{t_2'+\ldots + t_m'=j\atop  m=s-j} \binom{k-1}{s-1}  (g(0, t_2'-1,t_3',\ldots ,t_m')+\ldots +\\ g(0,t_2', \ldots , t_m'-1)) L^{(0,t_2',\ldots ,t_m')}
\end{multline*}
and then for $1<s<k$ and $0<j<s-1$ we get from \eqref{E:T_sjk} 
\begin{multline*}
T_{s,j,k}= \sum_{t_1'+\ldots + t_m'=j \atop m=s-j} \binom{k-1}{s} g(t_1', \ldots ,t_m') L^{(t_1', \ldots ,t_m')}+ 
\sum_{t_1'+\ldots + t_m'=j\atop t_1'\ge 1, m=s-j}\binom{k-1}{s-1} g(t_1',t_2', \ldots ,t_m') L^{(t_1', \ldots ,t_m')} + \\
\sum_{t_2'+\ldots + t_m'=j\atop  m=s-j} \binom{k-1}{s-1}  (g(0, t_2'-1,t_3',\ldots ,t_m')+\ldots + g(0,t_2', \ldots , t_m'-1)) L^{(0,t_2',\ldots ,t_m')} +\\
\sum_{t_2'+\ldots + t_m'=j\atop  m=s-j}\binom{k-1}{s-1} g(t_2',t_3',\ldots ,t_m') L^{(0,t_2',\ldots ,t_m')}= \\
\sum_{t_1'+\ldots + t_m'=j\atop t_1'\ge 1, m=s-j}\binom{k}{s} g(t_1',t_2', \ldots ,t_m') L^{(t_1', \ldots ,t_m')} + \sum_{t_2'+\ldots + t_m'=j\atop  m=s-j} \binom{k}{s} g(0,t_2',\ldots ,t_m') L^{(0,t_2',\ldots ,t_m')}=\\
\sum_{t_1'+\ldots + t_m'=j\atop  m=s-j}\binom{k}{s} g(t_1',t_2', \ldots ,t_m') L^{(t_1', \ldots ,t_m')}
\end{multline*}
as claimed.	For $j=s-1$ we get $m=1$ and therefore $T_{s,s-1,k}=\binom{k-1}{s-1}L^{(s-1)}+\binom{k-1}{s}L^{(s-1)} = \binom{k}{s}L^{(s-1)}$ as claimed.

For $s=k$ and $j=k-1$ we get $m=1$ and therefore $T_{k,k-1,k}= L^{(k-1)}$ as claimed. For generic $j$ we have
\begin{multline*}
T_{k,j,k}=\sum_{t_2'+\ldots + t_m'=j\atop  m=k-j} g(t_2', \ldots ,t_m')L^{(0,t_2', \ldots ,t_m')} +
\sum_{t_1'+\ldots + t_m'=j\atop t_1'\ge 1, m=k-j} g(t_1',t_2', \ldots ,t_m') L^{(t_1', \ldots ,t_m')} + \\
\sum_{t_2'+\ldots + t_m'=j\atop  m=k-j}   (g(0, t_2'-1,t_3',\ldots ,t_m')+\ldots + g(0,t_2', \ldots , t_m'-1)) L^{(0,t_2',\ldots ,t_m')} =
\sum_{t_1'+\ldots + t_m'=j\atop  m=k-j} g(t_1',t_2', \ldots ,t_m') L^{(t_1', \ldots ,t_m')}
\end{multline*}
as claimed and we are done. 	
\end{proof}

\subsection{Commutativity criterion for normal forms having the restriction top line}
\label{S:restriction top line}

In this section we'll prove a commutativity criterion for a pair of differential operators whose normal form has the restriction top line.

Before we formulate the theorem, we fix the notation and give several additional definitions. Let $(P,Q)\in D_1$ be a monic pair of differential operators, $Q$ is normalized, with  $\Ord (Q)=\deg (Q)=q> 0$, $\Ord (P)=\deg (P)=p$. Put $Q'=S^{-1}QS=\partial^q$, $P'=S^{-1}PS$, where $S$ is a Schur operator for $Q$. By \cite[Cor. 2.4]{GZ24}  $P'$ satisfies condition $A_q(0)$, i.e. in particular all its homogeneous components are totally free of $B_j$. 

Assume $F\in K[X,Y]$ is a non-zero polynomial such that $F(P,Q):=\sum_{i,j}c_{i,j}P^iQ^j=0$. Then $F$ can be presented as a sum of $(p,q)$-homogeneous polynomials: $F=F_1+\ldots + F_N$, where
$$
F_{j}(X,Y):=k_1^{(j)}X^{u_1^{(j)}}Y^{v_1^{(j)}}+\dots+k_{m^{(j)}}^{(j)}X^{u_{m^{(j)}}^{(j)}}Y^{v_{m^{(j)}}^{(j)}}, \quad k_{i^{(j)}}^{(j)}\in K,
$$ 
$$
N_{F_j}:=pu_1^{(j)}+qv_1^{(j)}=\cdots=pu_{m^{(j)}}^{(j)}+qv_{m^{(j)}}^{(j)}.
$$ 
Obviously, we have also the equation $F(P',Q')=0$, and since $F(P',Q')\in \hat{D}_1^{sym}$ is an operator whose homogeneous terms are HCPs from $Hcpc(q)$, this equation is equivalent to the system of infinite number of equations on coefficients of homogeneous terms of this operator, written if the G-form. Denote by $f_{l,i;r}(H)$ the coefficient of a HCP $H$ from $Hcpc(q)$. So,
$$
F(P',Q')=0 \quad \Leftrightarrow \quad f_{l,i;r}(F(P',Q')_r)=0, \quad r,l\in \dz, 0\le i< q.
$$
\begin{Def}
\label{D:identity_of_type_i}
We say {\it the identity of type $i$ for $F_j$} holds if 
\begin{equation}
\label{E:identity_of_type_i}
\sum_{1\leq l\leq m^{(j)} }\binom{u^{(j)}_{l}}{i}k_{l}^{(j)}=0.
\end{equation}
\end{Def}

\begin{Def}
	\label{D:H_d}
	Suppose $L$ is a HCP from $Hcpc(q)$ in G-form. For any $\sigma\ge 0,\rho>0$, $d\in \dz$  define "a filtration" of $L$ (determined by the weight function) as 
	$$
	H_{d;(\sigma,\rho)}(L):=\sum_{\sigma l+\rho j\ge d}\sum_{i=0}^{k-1}\alpha_{l,i;j}\Gamma_lA_i\partial^j
	$$
	If there is no ambiguity of $(\sigma,\rho)$, we'll simply write it as $H_d(L)$.
	
	If $L\in \hat{D}_1^{sym}\hat{\otimes}_K \tilde{K}$ and all its homogeneous components $L_i$ are HCP from $Hcpc(q)$ in G-form, we extend definition of $H_{d;(\sigma,\rho)}(L)$ in obvious way. 
\end{Def} 

\begin{lemma}
	\label{L:H_d sum}
	Suppose $L,M\in \hat{D}_1^{sym}\hat{\otimes}_K \tilde{K}$ are two operators such that all homogeneous components $L_i, M_i$ are HCPs from $Hcpc(q)$, suppose $(\sigma,\rho)$ is a real pair with $\sigma\ge 0$, $\rho >0$, and $v_{\sigma,\rho}(L),v_{\sigma,\rho}(M)<\infty$. Then 
	\begin{enumerate}
		\item If $d_1>v_{\sigma,\rho}(L)$, then $H_{d_1}(L)=0$.
		\item $H_d(L+M)=H_d(L)+H_d(M)$.
		\item If $d_1>d_2$, then 
		$$
		v_{\sigma,\rho}(H_{d_2}(L)-H_{d_1}(L))\leq d_1
		$$
		and 
		$$
		H_{d_1}[H_{d_2}(L)]=H_{d_2}[H_{d_1}(L)]=H_{d_1}(L)
		$$
	\end{enumerate}
\end{lemma}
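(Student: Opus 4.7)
The plan is to unpack each claim directly from the definition of $H_d$, which acts as a $\tilde{K}$-linear projection on the space of operators with HCP homogeneous components: it retains exactly those monomials $\alpha_{l,i;j}\Gamma_l A_i\partial^j$ in the G-form of $L$ whose index pair $(l,j)$ satisfies $\sigma l + \rho j \ge d$. First I would write $L$ and $M$ in G-form component-by-component; since each homogeneous component is a HCP of finite $x$-degree, the truncation is well-defined degree-by-degree, and all three statements reduce to elementary set-theoretic identities about the surviving index pairs.

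For part (1), I would invoke the definition of $v_{\sigma,\rho}(L)$ as the supremum of $\sigma l + \rho j$ over $(l,j)\in E(L)$. If $d_1 > v_{\sigma,\rho}(L)$, then no index pair of $E(L)$ can satisfy $\sigma l + \rho j \ge d_1$, so the sum defining $H_{d_1}(L)$ is empty and gives $0$. For part (2), the linearity of the G-form under addition combined with the fact that the admission condition $\sigma l + \rho j \ge d$ is applied pointwise to index pairs yields $H_d(L+M)=H_d(L)+H_d(M)$ immediately.

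For part (3), I would write
\[
H_{d_2}(L) - H_{d_1}(L) \;=\; \sum_{d_2 \,\le\, \sigma l + \rho j \,<\, d_1}\;\sum_{i=0}^{k-1}\alpha_{l,i;j}\Gamma_l A_i\partial^j,
\]
so every surviving index pair satisfies $\sigma l + \rho j < d_1$, which gives $v_{\sigma,\rho}(H_{d_2}(L)-H_{d_1}(L)) \le d_1$ directly from the definition of the weight function. For the composition identities, both $H_{d_1}[H_{d_2}(L)]$ and $H_{d_2}[H_{d_1}(L)]$ retain exactly those monomials of $L$ whose index pair satisfies both conditions $\sigma l + \rho j \ge d_2$ and $\sigma l + \rho j \ge d_1$ simultaneously; since $d_1 > d_2$, the first condition is redundant, and both compositions collapse to $H_{d_1}(L)$.

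There is no real obstacle here; the lemma is essentially a bookkeeping exercise about which index pairs pass a linear threshold. The one subtlety requiring minor care is to verify that the possibly infinite sums defining $H_d$ give legitimate elements of $\hat{D}_1^{sym}\hat{\otimes}_K\tilde{K}$, but this is automatic: for each fixed homogeneous degree $j$ the inner sum over $l$ is finite (since each $L_j$ is a HCP of finite $x$-degree), so the truncation is computed homogeneous-component-by-homogeneous-component without convergence issues.
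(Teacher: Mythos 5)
Your proof is correct and takes essentially the same approach as the paper, which simply unpacks item (1) from the definition of $v_{\sigma,\rho}$ and declares items (2) and (3) obvious. Your version is more detailed but does not differ in substance.
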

\begin{proof}
	1. $d_1>v_{\sigma,\rho}(L)$, then there doesn't exist $(m,u)\in E(L)$, such that $m\sigma+u\rho\ge d_1$, hence $H_{d_1}(L)=0$.
	
	2, 3 are obvious. 	
\end{proof}

\begin{lemma}
	\label{L:H_d mul}
	Suppose $L,M\in \hat{D}_1^{sym}\hat{\otimes}_K \tilde{K}$ are two operators such that all homogeneous components $L_i, M_i$ are HCPs from $Hcpc(q)$, suppose $(\sigma,\rho)$ is a real pair with $\sigma\ge 0$, $\rho >0$, and $v_{\sigma,\rho}(L),v_{\sigma,\rho}(M)<\infty$. Then 
	\begin{enumerate}
		\item If $d_1\ge v_{\sigma,\rho}(L)$, and $d_2\ge v_{\sigma,\rho}(M)$, then 
		$$
		H_{d_1+d_2}(LM)=H_{d_1+d_2}[H_{d_1}(L)H_{d_2}(M)]
		$$
		\item Suppose $d_1=v_{\sigma,\rho}(L), d_2=v_{\sigma,\rho}(M)$. If $H_{d_1-\sigma}(L)$ and $H_{d_2-\sigma}(M)$ doesn't contain $A_i$, then 
		$$
		H_{d_1+d_2-\sigma}([L,M])=H_{d_1+d_2-\sigma}([H_{d_1-\sigma}(L),H_{d_2-\sigma}(M)])
		$$
		with
		$$
		v_{\sigma,\rho}([L,M])\leq v_{\sigma,\rho}(L)+v_{\sigma,\rho}(M)-\sigma
		$$ 
		\item Suppose $d_1=v_{\sigma,\rho}(L), d_2=v_{\sigma,\rho}(M)$, and $v_{\sigma,\rho}([L,M])\leq d_1+d_2-\sigma$, we have for any $\epsilon>0$,
		$$
		H_{d_1+d_2-\sigma+\epsilon}(LM)=H_{d_1+d_2-\sigma+\epsilon}(ML)
		$$
		In particular
		$$
		H_{d_1+d_2}(LM)=H_{d_1+d_2}(ML)
		$$
	\end{enumerate}
\end{lemma}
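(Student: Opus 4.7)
The overall strategy for all three items is uniform: split $L$ and $M$ into a ``high-weight'' piece (their images under the appropriate $H_d$) plus a tail of strictly smaller weight, expand the relevant product or commutator, and dispose of the cross terms via two basic Appendix inputs -- submultiplicativity $v_{\sigma,\rho}(UV)\leq v_{\sigma,\rho}(U)+v_{\sigma,\rho}(V)$, together with a \emph{commutator drop} $v_{\sigma,\rho}([U,V])\leq v_{\sigma,\rho}(U)+v_{\sigma,\rho}(V)-\sigma$ valid when both $U$ and $V$ are free of the factors $A_i$ ($i>0$). This last property is precisely what forces (2) to assume that $H_{d_1-\sigma}(L)$ and $H_{d_2-\sigma}(M)$ are $A_i$-free: for monomials $\Gamma_lA_iD^a$ and $\Gamma_{l'}A_{i'}D^{a'}$, the leading $(\Gamma,D)$-monomials of $UV$ and $VU$ coincide thanks to $\Gamma_l\Gamma_{l'}=\Gamma_{l'}\Gamma_l$, but the scalar prefactors produced by moving $D^a$ past $A_{i'}$ and $D^{a'}$ past $A_i$ cancel one another only when $i=i'=0$.

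For (1), write $L=H_{d_1}(L)+L'$ with $v_{\sigma,\rho}(L')<d_1$ (immediate from the definition of $H_{d_1}$), and $M=H_{d_2}(M)+M'$ similarly. Expand $LM$ into its four summands and note that, by submultiplicativity, each of the three containing $L'$ or $M'$ has weight strictly less than $d_1+d_2$; Lemma~\ref{L:H_d sum}(1)--(2) then kill them after applying $H_{d_1+d_2}$, leaving only $H_{d_1+d_2}\bigl(H_{d_1}(L)H_{d_2}(M)\bigr)$. Part (3) is a one-liner: the hypothesis $v_{\sigma,\rho}([L,M])\leq d_1+d_2-\sigma$ makes $d_1+d_2-\sigma+\epsilon$ strictly exceed $v_{\sigma,\rho}([L,M])$ for every $\epsilon>0$, so Lemma~\ref{L:H_d sum}(1) gives $H_{d_1+d_2-\sigma+\epsilon}([L,M])=0$; setting $\epsilon=\sigma$ produces the ``in particular'' statement.

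Item (2) is the core of the lemma. Take the shifted decompositions $L=H_{d_1-\sigma}(L)+L''$ with $v_{\sigma,\rho}(L'')<d_1-\sigma$, and similarly $M=H_{d_2-\sigma}(M)+M''$. Expand $[L,M]$ into four commutators and bound the three that contain $L''$ or $M''$ by submultiplicativity (applied to $UV$ and $VU$ separately): since at least one factor has weight strictly less than $d_1-\sigma$ or $d_2-\sigma$, the resulting weight is strictly less than $d_1+d_2-\sigma$, so these vanish under $H_{d_1+d_2-\sigma}$. For the surviving principal commutator $[H_{d_1-\sigma}(L),H_{d_2-\sigma}(M)]$, the $A_i$-free hypothesis invokes the commutator drop, giving weight at most $d_1+d_2-\sigma$; combining, one obtains both the claimed identity and the bound $v_{\sigma,\rho}([L,M])\leq d_1+d_2-\sigma$. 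The main obstacle of the whole argument is the Appendix commutator-drop statement itself: one must verify that the leading $(\Gamma,D)$-monomials of $UV$ and $VU$ do cancel for $A_i$-free $U,V$ (using $\Gamma_l\Gamma_{l'}=\Gamma_{l'}\Gamma_l$ together with the fact that $[D^a,\Gamma_l]$ has strictly smaller $\Gamma$-degree), and check that the $A_i$-free hypothesis on the $(d-\sigma)$-layer is exactly what prevents a leftover weight-$(d_1+d_2)$ contribution from surviving in $[U,V]$.
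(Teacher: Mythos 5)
Your proof follows essentially the same strategy as the paper's: split each operator into a high-weight part $H_{d}(\,\cdot\,)$ plus a lower-weight tail, expand the product or commutator, discard cross terms, and for item~2 use the Appendix commutator-drop estimate (Lemma \ref{L:New Dixmier 2.7} item 3(a)) on the $A_i$-free leading layer. One small technical caveat: your assertion that $v_{\sigma,\rho}(L')<d_1$ is ``immediate from the definition'' is not quite accurate for arbitrary real $(\sigma,\rho)$ --- every individual point of $E(L')$ has weight $<d_1$, but the supremum can equal $d_1$ without being attained when $\sigma/\rho$ is irrational; the paper therefore argues pointwise, using Lemma \ref{L:New Dixmier 2.7} item~1 to show that \emph{no point} of $E(L_1M_3)$, $E(L_3M_1)$, $E(L_3M_3)$ can reach weight $d_1+d_2$, which is what actually makes $H_{d_1+d_2}$ of the cross terms vanish. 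Your conclusion is unaffected, but the appeal to strict inequality of $v_{\sigma,\rho}$ together with Lemma \ref{L:H_d sum}(1) slightly misstates the mechanism; also, the ``$\epsilon=\sigma$'' reading of the ``in particular'' in item~3 tacitly requires $\sigma>0$ (a point the paper itself glosses over).
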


\begin{proof}
	1. If $d_1>v_{\sigma,\rho}(L)$ or $d_2>v_{\sigma,\rho}(M)$, then $H_{d_1}(L)=0$ or $H_{d_2}=0$, and by Lemma \ref{L:New Dixmier 2.7} we know there doesn't exist $(l,j)\in E(LM)$, such that $l\sigma+j\rho>d_1+d_2$, hence $H_{d_1+d_2}(LM)=0$. Now let's consider the case when $d_1=v_{\sigma,\rho}(L)$ and $d_2=v_{\sigma,\rho}(M)$. 
	
	Suppose $L_1=H_{d_1}(L)$ and $M_1=H_{d_1}(M)$, put $L_3=L-L_1,M_3=M-M_1$. This means for any $(m_3,u_3)\in E(L_3)$ and $(n_3,v_3)\in E(M_3)$:
	$$
	m_3\sigma+u_3\rho<d_1, \quad n_3\sigma+v_3\rho<d_2
	$$ 
	
	Hence if there exists $(l,j)\in E(L_1M_3)\bigcup E(L_3M_1)\bigcup E(L_3M_3)$, we have $l\sigma+j\rho<d_1+d_2$. This means $H_{d_1+d_2}(L_1M_3)=H_{d_1+d_2}(L_3M_1)=H_{d_1+d_2}(L_3M_3)=0$. Thus
	$$
	H_{d_1+d_2}(LM)=H_{d_1+d_2}(L_1M_1+L_1M_3+L_3M_1+L_3M_3)=H_{d_1+d_2}(L_1M_1)
	$$
	
	2. Assume $L_1=H_{d_1-\sigma}(L), M_1=H_{d_2-\sigma}(M)$, put $L_3=L-L_1,M_3=M-M_1$. Then $v_{\sigma,\rho}(L_3)< d_1-\sigma, v_{\sigma,\rho}(M_3)< d_2-\sigma$ and there doesn't exist $(m_3,u_3)\in E(L_3)$, $(n_3,v_3)\in E(M_3)$, such that 
	$$
	m_3\sigma+u_3\rho=d_1-\sigma, n_3\sigma+v_3\rho=d_2-\sigma
	$$
	By the same arguments as above (use Lemma \ref{L:New Dixmier 2.7} item 1) there doesn't exist \\
	$(l,j)\in E(L_1M_3)\bigcup E(L_3M_1)\bigcup E(L_3M_3)$, such that 
	$$
	l\sigma+j\rho\ge d_1+d_2-\sigma
	$$
	Hence $H_{d_1+d_2-\sigma}(L_1M_3)=H_{d_1+d_2-\sigma}(L_3M_1)=H_{d_1+d_2-\sigma}(L_3M_3)=0$. Thus we get 
	$$
	H_{d_1+d_2-\sigma}(LM)=H_{d_1+d_2-\sigma}(L_1M_1+L_1M_3+L_3M_1+L_3M_3)=H_{d_1+d_2-\sigma}(L_1M_1)
	$$
	For the same reason we have 
	$$
	H_{d_1+d_2-\sigma}(ML)=H_{d_1+d_2-\sigma}(M_1L_1)
	$$
	So we get $H_{d_1+d_2-\sigma}([L,M])=H_{d_1+d_2-\sigma}([L_1,M_1])$.
	
	According to the assumptions, $L_1$ and $M_1$ doesn't contain $A_i$, then by Lemma \ref{L:New Dixmier 2.7} item 3(a), we know 
	$$
	v_{\sigma,\rho}([L_1,M_1])\leq d_1+d_2-\sigma .
	$$
	
	Now suppose $H_1=H_{d_1+d_2-\sigma}([L,M])=H_{d_1+d_2-\sigma}([L_1,M_1])$, and $H_3=H-H_1$. So we have $v_{\sigma,\rho}(H_1)\leq d_1+d_2-\sigma$ and $v_{\sigma,\rho}(H_3)\leq d_1+d_2-\sigma$, hence $v_{\sigma,\rho}([L,M])\leq d_1+d_2-\sigma$.
	
	3. Since $v_{\sigma,\rho}([L,M])\leq d_1+d_2-\sigma$, by Lemma \ref{L:H_d sum}
	$$
	H_{d_1+d_2-\sigma+\epsilon}([L,M])=0
	$$
	Hence 
	$$
	H_{d_1+d_2-\sigma+\epsilon}(LM)-H_{d_1+d_2-\sigma+\epsilon}(ML)=0
	$$
\end{proof}

\begin{rem}
	\label{R: H_d mul}
	 Compare this lemma item 2 with Lemma \ref{L:New Dixmier 2.7} item 3(a). Here we give out a more precise estimation: at that time we need $L,M$ are free of $A_i$, but here we only need a part of them not containing $A_i$.
\end{rem}

Combining this Lemma with  Lemma \ref{L:Key combinatorial}, we get
\begin{cor}
	\label{C:Combinatorial prepare for res infi}
	Suppose $L,M\in \hat{D}_1^{sym}\hat{\otimes}_K \tilde{K}$ are two operators such that all homogeneous components $L_i, M_i$ are HCPs from $Hcpc(q)$, suppose $(\sigma,\rho)$ is a real pair with $\sigma\ge 0$, $\rho >0$, and  $v_{\sigma,\rho}(L)=v_{\sigma,\rho}(M)=p$. Suppose $L,M$ satisfy the condition that 
	$$
	H_{2p}([L,M])=0
	$$
	Then for any $d>0$, we have 
	$$
	H_{dp}((L+M)^d)=\sum_{l=0}^d\binom{d}{l}H_{dp}(M^{d-l}L^l)
	$$
\end{cor}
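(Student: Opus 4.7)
The plan is to apply the combinatorial Lemma \ref{L:Key combinatorial} to expand $(L+M)^d$, identifying the lemma's $D$ with our $L$ and the lemma's $L$ with our $M$. This gives
\[
(L+M)^d = L^d + \sum_{i=1}^d\sum_{j=0}^{i-1} T_{i,j,d}\, L^{d-i},
\]
where each $T_{i,j,d}$ is a $\tilde{K}$-linear combination of products $M^{(t_1)}\cdots M^{(t_m)}$ with $M^{(t)}:=(\mathrm{ad}\, L)^t(M)$, multiple index $m=i-j$, and partial degree $t_1+\cdots+t_m=j$. For the $j=0$ row the recursion in Lemma \ref{L:Key combinatorial} forces $g(0,\ldots,0)=1$, so $T_{i,0,d}=\binom{d}{i}M^i$. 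After we reindex via $l=d-i$, this row contributes precisely $\sum_{l=0}^d\binom{d}{l}H_{dp}(M^{d-l}L^l)$ to $H_{dp}((L+M)^d)$. The remaining task is therefore to show that every term with $j\ge 1$ gets killed by the truncation $H_{dp}$.

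The core weight estimate is a strict inequality $v_{\sigma,\rho}(M^{(t)})<(t+1)p$ for all $t\ge 1$, which I would establish by induction on $t$. The base case $t=1$ is given directly by the hypothesis $H_{2p}([L,M])=0$, which translates into $v_{\sigma,\rho}([L,M])<2p$. For the inductive step I write $M^{(t)}=[L,M^{(t-1)}]$ and use the sub-additivity of $v_{\sigma,\rho}$ under products (which is the content of Lemma \ref{L:H_d mul}(1), or equivalently the appendix lemma \ref{L:New Dixmier 2.7}) to conclude $v_{\sigma,\rho}(M^{(t)})\le p+v_{\sigma,\rho}(M^{(t-1)})<p+tp=(t+1)p$. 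Combining the estimate across the factors of a monomial $M^{(t_1)}\cdots M^{(t_m)}$ with $m=i-j$ and $t_1+\cdots+t_m=j\ge 1$, at least one factor carries a strict drop, so
\[
v_{\sigma,\rho}\bigl(M^{(t_1)}\cdots M^{(t_m)}\bigr) < \sum_{s}(t_s+1)p = (j+m)p = ip.
\]

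Multiplying on the right by $L^{d-i}$ and using sub-additivity once more gives $v_{\sigma,\rho}(T_{i,j,d}L^{d-i})<ip+(d-i)p=dp$ whenever $j\ge 1$, and hence $H_{dp}(T_{i,j,d}L^{d-i})=0$ for all such $(i,j)$. Applying $H_{dp}$ termwise to the expansion of $(L+M)^d$ therefore leaves only the $j=0$ row, yielding
\[
H_{dp}((L+M)^d) = \sum_{i=0}^d \binom{d}{i}H_{dp}(M^i L^{d-i}) = \sum_{l=0}^d \binom{d}{l} H_{dp}(M^{d-l}L^l),
\]
as required.

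The main obstacle, and the place I would pay closest attention, is propagating the \emph{strict} loss $<2p$ coming from the single assumption $H_{2p}([L,M])=0$ through all iterated commutators and through multiplication by powers of $L$ and $M$. Note that we do not have (and do not need) any ordering identity like $H_{dp}(L^iM^{d-i})=H_{dp}(M^{d-i}L^i)$, which would require $\sigma>0$ together with a discreteness argument on $v_{\sigma,\rho}$; the whole point of doing the expansion with $D=L$ on the left of the lemma is that the surviving monomials already appear in the order $M^{d-l}L^l$ the statement demands.
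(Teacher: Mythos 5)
Your overall strategy coincides with the paper's: apply Lemma~\ref{L:Key combinatorial} with $D:=L$ (so the lemma's $L$ is $M$), observe that the $j=0$ row gives $\binom{d}{i}M^iL^{d-i}$ and reindexes to the stated sum, and show the $j\ge 1$ rows are annihilated by $H_{dp}$. The difference is purely in how the annihilation is argued, and that is where a gap opens up. You claim the hypothesis $H_{2p}([L,M])=0$ ``translates into'' the \emph{strict} inequality $v_{\sigma,\rho}([L,M])<2p$. That inference is not valid in general: $H_{2p}([L,M])=0$ only says that no point of $E([L,M])$ has $\sigma l+\rho j\ge 2p$, which gives $v_{\sigma,\rho}([L,M])\le 2p$; the supremum defining $v_{\sigma,\rho}$ may equal $2p$ without being attained (the paper itself notes, right after Definition~\ref{D:homogeneous (highest) term}, that $E(H,\sigma,\rho)$ can be empty while $v_{\sigma,\rho}(H)<\infty$ --- exactly the non-attained case, which occurs e.g.\ for irrational $\sigma/\rho$ and infinite $E([L,M])$). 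Your entire chain $v(M^{(t)})<(t+1)p$, then $v(M^{(t_1)}\cdots M^{(t_m)})<ip$, then $v(T_{i,j,d}L^{d-i})<dp$, and finally $H_{dp}(\cdot)=0$, needs strictness at the very first step, and the hypothesis does not supply it.

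The paper avoids this by never detouring through $v$: it keeps the whole induction at the level of the truncation operator. It proves $H_{(l+1)p}(M^{(l)})=0$ for all $l\ge 1$ by writing $H_{(l+1)p}(M^{(l)})=H_{(l+1)p}\bigl[H_p(L)H_{lp}(M^{(l-1)})-H_{lp}(M^{(l-1)})H_p(L)\bigr]$ via Lemma~\ref{L:H_d mul} item~1, with base case $H_{2p}(M^{(1)})=0$ being the hypothesis itself; the induction step closes because the inner factor $H_{lp}(M^{(l-1)})$ already vanishes. Then for $j\ge 1$ at least one $t_s\ge 1$, so $H_{(t_1+1)p}(M^{(t_1)})\cdots H_{(t_m+1)p}(M^{(t_m)})$ has a zero factor and $H_{ip}(M^{(t_1,\dots,t_m)})=0$, and one further application of Lemma~\ref{L:H_d mul} item~1 gives $H_{dp}(M^{(t_1,\dots,t_m)}L^{d-i})=0$. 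Your argument becomes correct if you replace each weight bound $v(\cdot)<np$ by the corresponding vanishing statement $H_{np}(\cdot)=0$ and invoke Lemma~\ref{L:H_d mul} item~1 rather than mere subadditivity of $v_{\sigma,\rho}$.
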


\begin{proof}
	Apply Lemma \ref{L:Key combinatorial} for $L,M$. Denote  $M^{(0)}=M,M^{(1)}=[L,M], M^{(2)}=[L,[L,M]], \dots$. For $1\leq l\leq d$, since $M^{(l)}=LM^{(l-1)}-M^{(l-1)}L$, then by Lemma \ref{L:H_d mul} item 1, we have   
	$$
	H_{(l+1)p}(M^{(l)})=H_{p}(L)H_{lp}(M^{(l-1)})-H_{lp}(M^{(l-1)})H_p(L)
	$$
	Since $H_{2p}(M^{(1)})=[L,M]=0$, then step by step we will get $H_{(l+1)p}(M^{(l)})=0$. Suppose $(t_1,\dots,t_m)$ and $(i,j)$ are the corresponding index to the term 
	$$
	f_{i,j,d}(t_1,\dots,t_m)M^{(t_1,\dots,t_m)}L^{d-i}
	$$
	in $(L+M)^d$, so by Lemma \ref{L:Key combinatorial}, $i-j=m$ and $j=t_1+\cdots+t_m$. If $j>0$, then at least one of $(t_1,t_2,\dots,t_m)$ are not 0, we have by lemma \ref{L:H_d mul} item 1
	$$
	H_{(j+m)p}(M^{(t_1,\dots,t_m)})= H_{(j+m)p}[ H_{(t_1+1)p}(M^{(t_1)})\times \cdots\times  H_{(t_m+1)p}(M^{(t_m)})]=0
	$$
	Hence for $j>0$
	$$
	H_{dp}(M^{(t_1,\dots,t_m)}L^{d-i})=0
	$$
	So by Lemma \ref{L:H_d sum} item 2 and by Lemma \ref{L:Key combinatorial}, we have 
	\begin{multline*}
		H_{dp}((L+M)^d)=H_{dp}(L^d+\sum_{i=1}^d\sum_{j=0}^{i-1}\sum_{t_1+\ldots + t_m=j \atop m=i-j} f_{i,j,d}(t_1,\dots,t_m)M^{(t_1,\dots,t_m)}L^{d-i})
		\\=H_{dp}(L^d+\sum_{i=1}^d\ f_{i,0,d}(0,\dots,0)M^{(0,\dots,0)}L^{d-i})
	\end{multline*}
	Notice that when $j=0$,$m=i-j=i$. So $M^{(0,\dots,0)}=M^m=M^i$, and $f_{i,0,d}=\binom{d}{i}g(0,\dots,0)$, with $g(0,\dots,0)=1$. Hence
	$$
	H_{dp}((L+M)^d)=\sum_{l=0}^d\binom{d}{l}H_{dp}(M^{d-l}L^l)
	$$
\end{proof}

\begin{rem}
	\label{R:Necessary condition for H2p[L,M]=0}
	The condition $	H_{2p}([L,M])=0$ holds if 
	\begin{enumerate}
		\item $L,M$ doesn't contain $A_i$.
		\item One of $L,M$ is $\partial^{ak}, a\in \mathbb{N}$. 
		\item $\exists r\ge 0$, $H_{p-r}(L)$ and $H_{p-r}(M)$ doesn't contain $A_i$.
	\end{enumerate}
	
	1 can refer to Lemma \ref{L:New Dixmier 2.7} item 3(a). 2 can refer to Lemma \ref{L:New Dixmier 2.7} item 3(b). 3 can be shown  by assuming $L_1=H_{p-r}(L),M_1=H_{p-r}(M)$, and arguing in the same way  like in Lemma \ref{L:H_d mul} item 3, so we omit the details here. Notice that when $r=0$ it's also true.
\end{rem} 

For the proof of our main theorem in this section we need one more definition. 

\begin{Def}
\label{D:H_d^m}
Suppose $L$ is a HCP from $Hcpc(q)$ in G-form. For any $\sigma\ge 0,\rho>0$, $d\in \dz$  define "a filtration" of $H_d(L)$ (determined by the $Sdeg_A$ function) as
	$$
	HS_{d;(\sigma,\rho)}^m(L):=\sum_{\sigma l+\rho j\ge d;l\leq m}\sum_{i=0}^{k-1}\alpha_{l,i;j}\Gamma_l\partial^j
	$$
	If there is no ambiguity of $(\sigma,\rho)$, we'll simply write it as $HS_d^m(L)$.
	
	If $L\in \hat{D}_1^{sym}\hat{\otimes}_K \tilde{K}$ and all its homogeneous components $L_i$ are HCP from $Hcpc(q)$ (in G-form), we extend definition of $HS_{d;(\sigma,\rho)}^m(L)$ in obvious way.
\end{Def} 

By definition,
$$
Sdeg_A(HS^m_d(L))\leq m
$$
and we have 
\begin{lemma}
	\label{L:HS}
	Suppose $L,M\in \hat{D}_1^{sym}\hat{\otimes}_K \tilde{K}$ are two operators such that all homogeneous components $L_i, M_i$ are HCPs from $Hcpc(q)$, suppose $(\sigma,\rho)$ is a real pair with $\sigma\ge 0$, $\rho >0$, and $d_1=v_{\sigma,\rho}(L),d_2=v_{\sigma,\rho}(M)$. Then we have:
	\begin{enumerate}
		\item If $d_1=d_2=d$, then 
		$$
		HS_d^m(L)+HS_d^m(M)=HS_d^m(L+M)
		$$
		\item For any $d$, we have 
		$$
		H_d(HS_d^m(L))=HS_d^m(H_d(L))=HS_d^m(L)
		$$
		\item For any $d$, $Sdeg_A(L)\leq a$ iff $HS_{d}^{a}(L)=H_d(L)$
		\item If $Sdeg_A(H_{d_1}(L))=a_1, Sdeg_A(H_{d_2}(M))=a_2$, then
		$$
		HS_{d_1+d_2}^{a_1+a_2}(LM)=H_{d_1+d_2}(HS_{d_1}^{a_1}(L)HS_{d_2}^{a_2}(M))
		$$
		\item If $E(L)=\{(a_1,b_1)\}$, where $a_1\sigma+b_1\rho=d_1$, and $HS_{d_2}^{a_2}(M)=0$, then 
		$$
		HS_{d_1+d_2}^{a_1+a_2}(LM)=0
		$$ 
	\end{enumerate}
\end{lemma}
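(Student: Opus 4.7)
My approach is to prove the five items in order: items 1--3 are direct bookkeeping with the definitions, while items 4 and 5 are the substantive content, which reduces to Lemma \ref{L:H_d mul} item 1 together with the multiplicativity of $Sdeg_A$ on homogeneous components collected in the Appendix.

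Items 1 and 2 follow immediately by unpacking the definition of $HS_d^m$, which applies two linear filters $(\sigma l+\rho j\ge d,\; l\le m)$ monomial-wise: item 1 is linearity in the first argument when the filters are fixed, and item 2 is the commutativity and idempotence of the weight filter combined with the fact that the filter $l\le m$ is independent of the weight condition. For item 3, the direction $Sdeg_A(L)\le a\Rightarrow HS_d^a(L)=H_d(L)$ is immediate, since the constraint $l\le a$ then removes nothing from $H_d(L)$. Conversely, if $HS_d^a(L)=H_d(L)$ holds for every $d$, taking $d$ below any weight occurring in $L$ yields $HS_d^a(L)=L$, forcing $Sdeg_A(L)\le a$. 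In practice the only direction needed for item 4 is the variant: the hypothesis $Sdeg_A(H_{d_1}(L))=a_1$ implies $HS_{d_1}^{a_1}(L)=H_{d_1}(L)$, which follows from item 2 applied to $L$ and the forward direction applied to $H_{d_1}(L)$.

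Item 4 is the main point. Starting from Lemma \ref{L:H_d mul} item 1, we have $H_{d_1+d_2}(LM)=H_{d_1+d_2}\bigl(H_{d_1}(L)H_{d_2}(M)\bigr)$. By the variant of item 3 above, each factor $H_{d_i}(\cdot)$ can be replaced by $HS_{d_i}^{a_i}(\cdot)$, giving $H_{d_1+d_2}(LM)=H_{d_1+d_2}\bigl(HS_{d_1}^{a_1}(L)\,HS_{d_2}^{a_2}(M)\bigr)$. It then remains to show $H_{d_1+d_2}(LM)=HS_{d_1+d_2}^{a_1+a_2}(LM)$, i.e.\ every monomial of the top-weight part of $LM$ satisfies $l\le a_1+a_2$. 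This reduces to the claim that multiplying two monomials $\Gamma_{l_1}A_{i_1}\partial^{j_1}$ and $\Gamma_{l_2}A_{i_2}\partial^{j_2}$ with $l_i\le a_i$ produces, on the top weight line, only terms with $Sdeg_A\le l_1+l_2$; each commutator $[\partial^{j_1},\Gamma_{l_2}]$ has strictly lower $Sdeg_A$ than the straight product and at most the same weight, so cross-terms cannot raise $Sdeg_A$ above $a_1+a_2$ on the top line. This is the content of the top-line multiplication statement from Lemma \ref{L:New Dixmier 2.7}, and I expect the bookkeeping here to be the main obstacle, although it is routine given the Appendix.

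Item 5 is then a direct consequence. Since $E(L)=\{(a_1,b_1)\}$ with $a_1\sigma+b_1\rho=d_1$, the whole of $L$ coincides with $H_{d_1}(L)$ and has $Sdeg_A(H_{d_1}(L))=a_1$, so by item 3 we have $L=HS_{d_1}^{a_1}(L)$. The same argument as in item 4 (which does not require $Sdeg_A(H_{d_2}(M))$ to equal $a_2$, only to be $\le a_2$, since we multiply by the single monomial of $L$ at $(a_1,b_1)$) yields
\[
HS_{d_1+d_2}^{a_1+a_2}(LM)=H_{d_1+d_2}\bigl(HS_{d_1}^{a_1}(L)\cdot HS_{d_2}^{a_2}(M)\bigr)=H_{d_1+d_2}(L\cdot 0)=0,
\]
as required.
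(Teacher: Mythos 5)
Items 1--3 are fine and match the paper's treatment (``by definitions''), and your argument for item 4 is correct and essentially the paper's: both arguments reduce to $H_{d_1+d_2}(LM)=H_{d_1+d_2}\bigl(H_{d_1}(L)H_{d_2}(M)\bigr)$ (Lemma~\ref{L:H_d mul}) plus the subadditivity of $Sdeg_A$ on products of HCP monomials; the paper routes the chain of equalities through $HS^{a_1+a_2}_{d_1+d_2}$ directly while you establish $H_{d_1+d_2}(LM)=HS^{a_1+a_2}_{d_1+d_2}(LM)$ as a separate step, but this is a cosmetic difference.

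Item 5, however, contains a genuine error coming from a misreading of the hypothesis. You treat $HS_{d_2}^{a_2}(M)=0$ as if it said $Sdeg_A(H_{d_2}(M))\le a_2$ (which together with vanishing would force $H_{d_2}(M)=0$), and you then try to reuse the item-4 chain with $HS^{a_2}_{d_2}(M)$ substituted for $H_{d_2}(M)$. But $HS_{d_2}^{a_2}(M)=0$ means exactly the opposite: every $(n,v)\in E(H_{d_2}(M))$ satisfies $n>a_2$, so when $H_{d_2}(M)\neq 0$ one has $Sdeg_A(H_{d_2}(M))>a_2$ and in particular $HS^{a_2}_{d_2}(M)\neq H_{d_2}(M)$. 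The step of item 4 that replaces $H_{d_2}(M)$ by $HS^{a_2}_{d_2}(M)$ under the filter $HS^{a_1+a_2}_{d_1+d_2}$ relied precisely on that equality, so it does not carry over, and the identity $HS^{a_1+a_2}_{d_1+d_2}(LM)=H_{d_1+d_2}\bigl(HS^{a_1}_{d_1}(L)HS^{a_2}_{d_2}(M)\bigr)$ is exactly the thing left to prove, not something inherited from item 4. The actual argument must use the additional structure of $L$: since $E(L)=\{(a_1,b_1)\}$, multiplying the single monomial $\alpha\Gamma_{a_1}D^{b_1}$ of $L$ by a top-line monomial $M_0=\beta\Gamma_n D^v$ of $M$ (so $n>a_2$) gives, after extracting the top-weight part with $\sigma>0$, the single monomial $\alpha\beta\Gamma_{a_1+n}D^{b_1+v}$, whose $Sdeg_A$ equals $a_1+n>a_1+a_2$; the below-top-line part of $M$ contributes nothing to $H_{d_1+d_2}(LM)$ by Lemma~\ref{L:H_d mul}. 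Hence $HS^{a_1+a_2}_{d_1+d_2}(LM)=0$.
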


\begin{proof}
	1, 2, 3 are by definitions.
	
	4. By Lemma \ref{L:H_d mul} we have $H_{d_1+d_2}(LM)=H_{d_1+d_2}(H_{d_1}(L)H_{d_2}(M))$. Hence we have 
	\begin{multline*}
	HS_{d_1+d_2}^{a_1+a_2}(LM)=HS_{d_1+d_2}^{a_1+a_2}(H_{d_1+d_2}(LM))=HS_{d_1+d_2}^{a_1+a_2}(H_{d_1}(L)H_{d_2}(M))
	\\=HS_{d_1+d_2}^{a_1+a_2}(HS_{d_1}^{a_1}(L)HS_{d_2}^{a_2}(M))=H_{d_1+d_2}(HS_{d_1}^{a_1}(L)HS_{d_2}^{a_2}(M))
	\end{multline*}
	The first equality is by item 2; The second is Lemma \ref{L:H_d mul}; The last two  are by item 3.
	
	5. $HS_{d_2}^{a_2}(M)=0$, means that for any $(n,v)\in E(H_{d_2}(M))$ holds $n> a_2$. Denote $M_0=\Gamma_{v}D^{n}$. Then 	
	$$
	H_{d_1+d_2}(LM_0)=\alpha_{a_1,b_1}\beta_{n,v}H_{d_1+d_2}(\sum_{l=0}^{n}\binom{n}{l}b_1^l\Gamma_{n+a_1-l}D^{b_1+v})=\alpha_{a_1,b_1}\beta_{n,v}\Gamma_{a_1+n}D^{b_1+v}
	$$
	hence $HS_{d_1+d_2}^{a_1+a_2}(LM)=0$.
\end{proof}

\begin{theorem}
	\label{T:Restriction infinite}
	Assume $(P,Q)\in D_1$ be a monic pair of differential operators, $Q$ is normalized, with  $\Ord (Q)=\deg (Q)=q> 0$, $\Ord (P)=\deg (P)=p$. Put $Q'=S^{-1}QS=\partial^q$, $P'=S^{-1}PS$, where $S$ is a Schur operator for $Q$ (cf. \cite[Prop. 2.5]{GZ24}).
	
	Suppose $P'$ has the restriction top line, then there doesn't exist a non-zero polynomial $F\in K[X,Y]$, such that $F(P,Q)=0$.
\end{theorem}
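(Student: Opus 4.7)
The plan is to argue by contradiction. Assume $0 \neq F \in K[X,Y]$ satisfies $F(P,Q) = 0$; equivalently $F(P',\partial^q) = S^{-1}F(P,Q)S = 0$. I decompose $F = F_1 + \cdots + F_N$ into its $(p,q)$-weighted homogeneous parts with strictly decreasing weighted degrees $N_{F_1} > \cdots > N_{F_N}$, so that the leading component $F_1$ is non-zero; the goal is to show $F_1 = 0$, which gives the desired contradiction. Let $\sigma > 0$ be the slope of the restriction top line of $P'$, so that $v_{\sigma,1}(P') = p$ and $v_{\sigma,1}(\partial^q) = q$. Subadditivity of $v_{\sigma,1}$ yields $v_{\sigma,1}(F_j(P',\partial^q)) \leq N_{F_j}$; since $N_{F_j} < N_{F_1}$ for $j > 1$, only $F_1$ contributes at the top weight, so $H_{N_{F_1}}\bigl(F_1(P',\partial^q)\bigr) = 0$.

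To analyze this leading piece, set $L := H_p(P')$ and write $L = \partial^p + R$. By Corollary \ref{C:No Ai on top line}, $L$ is free of $A_i$, so $R$ is a sum of monomials $f_{l,0;j}\Gamma_l \partial^j$ with $l > 0$ and $\sigma l + j = p$; because the restriction top line has at least two vertices, $R \neq 0$. Since neither $\partial^p$ nor $R$ contains $A_i$, Remark \ref{R:Necessary condition for H2p[L,M]=0} item 1 gives $H_{2p}([\partial^p, R]) = 0$, so Corollary \ref{C:Combinatorial prepare for res infi} applies to the pair $(\partial^p, R)$. Combining this corollary with Lemma \ref{L:H_d mul} item 1 (to iteratively identify $H_{u_jp}(P'^{u_j})$ with $H_{u_jp}(L^{u_j})$ and then factor off the $\partial^{qv_j}$ factor), I obtain
\[
H_{N_{F_1}}\bigl(F_1(P',\partial^q)\bigr) = \sum_{i \geq 0} c_i \, H_{N_{F_1}}\bigl(R^i \partial^{N_{F_1} - ip}\bigr), \qquad c_i := \sum_j k_j \binom{u_j}{i},
\]
where the $c_i$ are precisely the identity-of-type-$i$ coefficients for $F_1$ from Definition \ref{D:identity_of_type_i}.

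The crucial step is to show these summands are linearly independent, forcing every $c_i$ to vanish. Let $l^* > 0$ be the maximal $Sdeg_A$ appearing in $R$ and $\alpha^* \neq 0$ its coefficient. Using $\Gamma_a\Gamma_b = \Gamma_{a+b}$ and that $[\partial^c,\Gamma_l]$ has strictly smaller $Sdeg_A$ than $l$, a direct calculation shows that the top-$Sdeg_A$ piece (at $Sdeg_A = il^*$) of $H_{N_{F_1}}(R^i \partial^{N_{F_1}-ip})$ equals $\alpha^{*i}\Gamma_{il^*}\partial^{N_{F_1} - i\sigma l^*}$, while all other contributions to this term have strictly smaller $Sdeg_A$. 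If $i_0$ is the maximal index with $c_{i_0} \neq 0$, then the $Sdeg_A = i_0 l^*$ component of $\sum_i c_i H_{N_{F_1}}(R^i \partial^{N_{F_1}-ip})$ receives contributions only from $i = i_0$ (terms with $i > i_0$ vanish since $c_i = 0$, and those with $i < i_0$ have $Sdeg_A \leq il^* < i_0 l^*$); it equals $c_{i_0}\alpha^{*i_0}\Gamma_{i_0 l^*}\partial^{N_{F_1} - i_0\sigma l^*} \neq 0$, contradicting $H_{N_{F_1}}(F_1(P',\partial^q)) = 0$. Hence $c_i = 0$ for all $i$. Finally, $F_1$ being $(p,q)$-weighted homogeneous forces its exponents $u_j$ to be pairwise distinct, so the identity $\sum_j k_j(1+t)^{u_j} = \sum_i c_i t^i \equiv 0$ implies $k_j = 0$ for every $j$ by linear independence of the polynomials $(1+t)^{u_j}$; therefore $F_1 = 0$.

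The main obstacle is the $Sdeg_A$-analysis of the products $R^i \partial^{N_{F_1} - ip}$: one must verify, via careful bookkeeping of commutators $[\partial^c,\Gamma_l]$ across the $HS^m$-filtration of Lemma \ref{L:HS}, that expansions mixing lower-$l$ monomials of $R$ produce only strictly smaller $Sdeg_A$, so the leading $Sdeg_A = il^*$ contribution $\alpha^{*i}\Gamma_{il^*}\partial^{\ldots}$ coming from choosing the maximal-$l$ monomial in each of the $i$ slots is unique, survives the multiplication by $\partial^{N_{F_1}-ip}$ and the iterated commutator rearrangement built into Corollary \ref{C:Combinatorial prepare for res infi}, and cannot be cancelled across different values of $i$.
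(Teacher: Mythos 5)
Your overall strategy matches the paper's: write $P' = \partial^p + R + (\text{lower weight})$, reduce $F_1(P',\partial^q)=0$ to the top-weight equation $\sum_i c_i H_{N_{F_1}}(R^i\partial^{N_{F_1}-ip}) = 0$ with $c_i = \sum_j k_j\binom{u_j}{i}$ (the paper's ``identity of type $i$'' coefficients), and force all $c_i$ to vanish by an $Sdeg_A$-filtration argument; the conclusion $F_1=0$ then follows exactly as you say. The structural pieces — Corollary \ref{C:No Ai on top line}, Remark \ref{R:Necessary condition for H2p[L,M]=0}, Corollary \ref{C:Combinatorial prepare for res infi}, Lemma \ref{L:HS} — are invoked in essentially the same way.

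However, there is a genuine gap at the crucial step. You define $l^*$ to be the \emph{maximal} $Sdeg_A$ occurring among the monomials of $R$, pick $i_0$ maximal with $c_{i_0}\neq 0$, and read off the coefficient at $\Gamma_{i_0 l^*}$. This works only if $l^*$ exists. But the restriction top line of $P'$ may carry \emph{infinitely many} vertices: the normal form $P'$ is generally an infinite series, its Newton region can be unbounded, and condition $A_q(0)$ only bounds $Sdeg_A(P'_{p-i})$ linearly in $i$, which is compatible with vertices $(a_n,b_n)$ on the line $\sigma X+Y=p$ having $a_n\to\infty$. Indeed the paper explicitly writes the vertex list as $(0,p),(a_0,b_0),(a_1,b_1),\dots,(a_n,b_n),\dots$ with $0<a_0<a_1<\cdots$, allowing an infinite tail. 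When this happens $R$ has no monomial of maximal $Sdeg_A$, the quantity $l^*$ is undefined, and your ``pick the top corner'' argument has nowhere to start. (Trying to salvage by instead combining maximal $i_0$ with the \emph{minimal} positive $Sdeg_A$ value $a_0$ of $R$ also fails: the terms $R^i$ with $i<i_0$ already contribute monomials at $Sdeg_A=ia_0<i_0a_0$, so they pollute the coefficient at $\Gamma_{i_0a_0}$.)

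The paper avoids this by running the argument from the other end: it isolates the single monomial $\bl_0=\Gamma_{a_0}\partial^{b_0}$ at the \emph{minimal} positive $Sdeg_A$ value $a_0$ of the top line (which always exists, being the minimum of a nonempty set of positive integers), writes $R = \bl_0+\bl_1$ with $HS^{a_0}_p(\bl_1)=0$, and proceeds by \emph{forward} induction on $s$. Extracting the coefficient at $\Gamma_{sa_0}\partial^{\ldots}$ via $HS^{sa_0}_{N_{F}}$, the terms with $l<s$ vanish by the inductive hypothesis $c_0=\cdots=c_{s-1}=0$, while the terms with $l>s$ produce only monomials of $Sdeg_A\ge la_0>sa_0$ and hence die under $HS^{sa_0}$; only $l=s$, $h=0$ (i.e.\ $\bl_0^s$) survives, giving $c_s\alpha^s\Gamma_{sa_0}\partial^{\ldots}=0$ and hence $c_s=0$. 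This works with no finiteness hypothesis on the top line. To repair your proof, replace ``maximal $l^*$ / maximal $i_0$'' with ``minimal $a_0$ / induction on $s$'' as above; the rest of your write-up carries over.
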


\begin{rem}
It can be shown that if the normal form of $P$ with respect to $Q$ has the restriction top line, then the normal form of $Q$ with respect to $P$ has the restriction top line too. We are going to clarify the details of this fact in  a subsequent paper. 
\end{rem}

\begin{proof}
	Assume the converse: suppose such $F$ exists. The idea of the proof is to show that the identities of type $i$ holds for $F_1$ for  all $i\gg 0$. This would imply $F_1=0$, a contradiction.\footnote{The same idea works in the case of any Burchnall-Chaundy polynomials. For such polynomials it is just an easy exercise to show that theorem is true either if  $P'$ has the restriction top line or if $P'$ has the asymptotic top line.} 
	
	Arrange the vertices on the restriction top line associated to $(\sigma,1)=(p/q,1)$ as $(0,p)$, $(a_0,b_0)$, $(a_1,b_1),\cdots,(a_n,b_n),\cdots$, with $0<a_0<a_1<\cdots<a_n<\cdots$, the coefficient of $(a_i,b_i)$ is $t_i\in\tilde{K}$ according to Corollary \ref{C:No Ai on top line}. Assume $F_1(P,Q)=f_{p,q}(F)=k_1X^{u_1}Y^{v_1}+\dots+k_mX^{u_m}Y^{v_m}$, where $u_1>u_2>\dots>u_m, k_i\neq 0$,  $N_F=v_{p,q}(F)=u_ip+v_iq$ for all $1\leq i\leq m$. Suppose $\bar{F}=F-F_1$, it's easy to see $f_{p,q}(\bar{F})\leq N_F-1$, so that $H_{N_F}(\bar{F})=0$.
	
	Suppose $P'=\partial^p+L$. Since $P'$ has the restriction top line, we know $v_{\sigma,1}(L)=v_{\sigma,1}(P')=p$. Denote $\bd=\partial^p$, and put $\bl=L$, $\bl_0=\Gamma_{a_0}\partial^{b_0}, \bl_1=f_{\sigma,1}(L)-\bl_0, \bl_2=L-\bl_0-\bl_1$. It's easy to find 
	\begin{equation}
	\label{E:qwerty}
	p=v_{\sigma,1}(\bd)=v_{\sigma,1}(\bl_0)\ge v_{\sigma,1}(\bl_1),\quad p\ge v_{\sigma,1}(\bl_2)
	\end{equation}
	with $H_p(\bl_2)=0$, and also
	$$
	a_0=Sdeg_A(\bl_0)<a_1
	$$
	For $d>0$, consider 
	$$
	H_{pd}(P'^d)=H_{pd}((\bd+\bl_0+\bl_1+\bl_2)^d)=H_{pd}((\bd+\bl_0+\bl_1)^d)+H_{pd}(\sum_{l=1}^{d}\binom{d}{l}(\bd+\bl_0+\bl_1)^{d-l}\bl_2^{l}),
	$$
	where the last equality follows from corollary \ref{C:Combinatorial prepare for res infi}. 
	For any $1\leq l\leq d$, by \eqref{E:qwerty}  and  by Lemma \ref{L:H_d mul} item 1 (used  $d$ times), we have 
	$$
	H_{pd}((\bd+\bl_0+\bl_1)^{d-l}\bl_2^l)=H_{pd}[(H_p(\bd+\bl_0+\bl_1))^{d-l}(H_p(\bl_2))^l]=0
	$$
	So we have 
	$$
	H_{pd}(P'^d)=H_{pd}((\bd+\bl_0+\bl_1+\bl_2)^d)
	$$
	
	Since $Q'=\partial^q$, we have $H_q(Q')=Q'=\partial^q$. For the same reason we have 
	$$
	H_{N_F}(P'^{u_j}Q'^{v_j})=H_{N_F}((\bd+\bl_0+\bl_1)^{u_j}\partial^{v_jq})
	$$
	hence 
	$$
	H_{N_F}(F(P',Q'))=H_{N_F}(F_1(P',Q'))=H_{N_F}[\sum_{j=1}^{m}k_j((\bd+\bl_0+\bl_1)^{u_j}\partial^{v_j})]
	$$
	
	Now use  Corollary \ref{C:Combinatorial prepare for res infi} for $L:=\bd,M:=\bl_0+\bl_1,d=u_j$ (notice they satisfy the condition in Remark \ref{R:Necessary condition for H2p[L,M]=0} for item 2). So we have for any $1\leq j\leq m$:
	$$
	H_{u_jp}((\bd+\bl_0+\bl_1)^{u_j})=\sum_{l=0}^{u_j}\binom{u_j}{l}(\bl_0+\bl_1)^l\bd^{u_j-l}
	$$
	Thus
	\begin{equation}
		\label{E:H Nf L0L1 v1 res inf}
		H_{N_F}(F(P',Q'))=\sum_{j=1}^{m}k_j\cdot H_{N_F}(\sum_{l=0}^{u_j}\binom{u_j}{l}(\bl_0+\bl_1)^l\partial^{N_F-lp})
	\end{equation}
	
	To find the coefficient at $\partial^{N_F}$ in the equation $F(P',Q')=0$ (so, this expression should be zero),  we need to calculate $HS_{N_F}^{0}(F(P',Q'))$. Since $HS_p^0(P')=\partial^p$ and $HS_{q}^0(Q')=\partial^q$, by Lemma \ref{L:HS}, we have  
	$$
	HS_{N_F}^{0}(F(P',Q'))=HS_{N_F}^0(H_{N_F}(F(P',Q')))=\sum_{j=1}^mk_j\partial^{N_F}
	$$
	Thus we get the equation of type 0:
	\begin{equation}
		\label{Eq: Restri type 0}
		\sum_{j=1}^{m}k_j=0
	\end{equation}
	Now suppose  the identities of $0,1,\dots,s-1$ type hold, we use induction to prove the identity of type $s$, i.e
	\begin{equation}
		\label{Eq: Restri type i}
		\sum_{j=1}^m\binom{u_j}{s}k_j=0
	\end{equation}
 Note that 
	\begin{multline*}
		H_{N_F}(F(P',Q'))=
		\sum_{l=0}^{s-1}\sum_{j=1}^{m}\binom{u_j}{l}k_j\cdot H_{N_F}((\bl_0+\bl_1)^l\partial^{N_F-lp})+\sum_{l=s}^{u_j} \sum_{j=1}^{m}\binom{u_j}{l}k_j\cdot H_{N_F}((\bl_0+\bl_1)^l\partial^{N_F-lp})
		\\=\sum_{l=s}^{u_j} \sum_{j=1}^{m}\binom{u_j}{l}k_j\cdot H_{N_F}((\bl_0+\bl_1)^l\partial^{N_F-lp})
	\end{multline*}

	To find the coefficient at $\Gamma_{sa_0}\partial^{N_F-s(p-b)}$,  we need to calculate $HS_{N_F}^{sa_0}(F(P',Q'))$. Notice that both $\bl_0$ and $\bl_1$ lie on $Edg_u(P')$, this means they doesn't contain $A_i$, hence they satisfy the condition item 3 in Remark \ref{R:Necessary condition for H2p[L,M]=0}. Use Corollary \ref{C:Combinatorial prepare for res infi} again for  $L:=\bl_0,M:=\bl_1$, we have  
	$$
	H_{lp}((\bl_0+\bl_1)^l)=H_{lp}[\sum_{h=0}^{l}\binom{l}{h}\bl_0^{l-h}\bl_1^{h}]
	$$
	Since we have $Sdeg_A(\bl_0)=a_0<a_1$, hence $HS_{p}^{a_0}(\bl_0)=HS_{p}^{a_0}(\bl)=\bl_0$, hence $HS_{p}^{a_0}(\bl_1)=HS_{p}^{a_0}(\bl)-HS_{p}^{a_0}(\bl_0)=0$. Now we can use Lemma \ref{L:HS} item 5 (since $v_{\sigma ,1}(\bl_1)\le p$), i.e.
	$$
	HS_{lp}^{la_0}(\bl_0^{l-h}\bl_1^h)=\begin{cases}
		0 & h\neq 0
		\\H_{lp}\bl_0^l &h=0
	\end{cases}
	$$
	For the same reason we have 
	$$
	HS_{N_F}^{sa_0}(\bl_0^{l-h}\bl_1^h\partial^{N_F-lp})=\begin{cases}
		0 & h\neq 0 \quad \text{or}\quad  l>s
		\\ H_{N_F}(\bl_0^l\partial^{N_F-pl}) & h=0\quad l=s
	\end{cases}
	$$
	 So
	\begin{multline*}
		HS_{N_F}^{sa_0}(F(P',Q'))=HS_{N_F}^{sa_0}(H_{N_F}(F(P',Q')))
		= \sum_{j=1}^{m}\binom{u_j}{s}k_j\cdot HS_{N_F}^{sa_0}(\sum_{h=0}^{s}\binom{s}{h}\bl_0^{s-h}\bl_1^{h}\partial^{N_F-sp})+
		\\\sum_{l=s+1}^{u_j} \sum_{j=1}^{m}\binom{u_j}{l}k_j\cdot HS_{N_F}^{sa_0}((\bl_0+\bl_1)^l\partial^{N_F-lp})
		=\sum_{j=1}^{m}\binom{u_j}{s}k_j\bl_0^{s}\partial^{N_F-sp}
	\end{multline*}
	Hence we get the identity for type $s$. Now we know they hold for any positive integer $i$, we have $$\binom{u_1}{i}k_1+\cdots+\binom{u_m}{i}k_m=0$$
	so choose $u_2<i\leq u_1$, and consider corresponding equation, we know only $u_1>i$, hence only one term left, and we get
	$$
	k_1\binom{u_1}{i}=0
	$$
	We get $k_1=0$, this is a contradiction.
\end{proof}

\section{Appendix}
\label{S:appendix}

In this section we collect all necessary basic technical assertions about the function $v_{\sigma,\rho}$ and the homogeneous highest terms $ f_{\sigma,\rho}$ used in the paper.

\begin{lemma}
	\label{L:New Dixmier 2.4}
		Suppose $L,M\in \hat{D}_1^{sym}\hat{\otimes}_K \tilde{K}$ are two operators such that all homogeneous components $L_i, M_i$ are HCPs from $Hcpc(k)$, suppose $(\sigma,\rho)$ is a real  pair with $\sigma\ge 0$, $\rho >0$, and $v_{\sigma,\rho}(L),v_{\sigma,\rho}(M)<\infty$. Then 
		\begin{enumerate}
			\item $v_{\sigma,\rho}(L+M)\leq \max\{v_{\sigma,\rho}(L),v_{\sigma,\rho}(M)\}$, and the equality holds if $v_{\sigma,\rho}(L)\neq v_{\sigma,\rho}(M)$.
			\item If $v_{\sigma,\rho}(L)\neq v_{\sigma,\rho}(M)$,
			then 
			$$ f_{\sigma,\rho}(L+M)=
			\begin{cases}
				f_{\sigma,\rho}(L)  & v_{\sigma,\rho}(L)> v_{\sigma,\rho}(M)
				\\f_{\sigma,\rho}(M)  & v_{\sigma,\rho}(L)< v_{\sigma,\rho}(M)
			\end{cases}
			$$
			so that we have $f_{\sigma,\rho}(L+M)=f_{\sigma,\rho}(f_{\sigma,\rho}(L)+f_{\sigma,\rho}(M))$ if $f_{\sigma,\rho}(L),f_{\sigma,\rho}(M)\neq 0$.
			\item If $v_{\sigma,\rho}(L)=v_{\sigma,\rho}(M)=v_{\sigma,\rho}(L+M)$, then 
			$$
			f_{\sigma,\rho}(L+M)=f_{\sigma,\rho}(L)+f_{\sigma,\rho}(M)
			$$
		\end{enumerate}
\end{lemma}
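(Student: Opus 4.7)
The plan is to proceed directly from the definitions of $v_{\sigma,\rho}$, $E(\cdot)$, and $f_{\sigma,\rho}$, using only the fact that in the $\Gamma_l A_i D^j$-basis of G-form, the coefficient functionals $f_{l,i;j}(\cdot)$ are $\tilde{K}$-linear, so that $f_{l,i;j}(L+M)=f_{l,i;j}(L)+f_{l,i;j}(M)$. All three items are standard ultrametric-valuation bookkeeping once this is in hand.

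First I would establish the key set inclusion $E(L+M)\subseteq E(L)\cup E(M)$. Indeed, if $(l,j)\in E(L+M)$, then by definition there exists some $i$ with $f_{l,i;j}(L+M)\neq 0$; linearity forces at least one of $f_{l,i;j}(L)$, $f_{l,i;j}(M)$ to be nonzero, putting $(l,j)$ in $E(L)\cup E(M)$. Taking the sup of $\sigma l+\rho j$ over this union immediately gives $v_{\sigma,\rho}(L+M)\le\max\{v_{\sigma,\rho}(L),v_{\sigma,\rho}(M)\}$, which is the inequality in item 1.

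Next, for the equality claim in item 1 and for item 2, I would argue as follows. Suppose without loss of generality that $v_{\sigma,\rho}(L)>v_{\sigma,\rho}(M)$, and pick any $(l_0,j_0)\in E(L,\sigma,\rho)$ and index $i_0$ with $f_{l_0,i_0;j_0}(L)\neq 0$. Since $\sigma l_0+\rho j_0=v_{\sigma,\rho}(L)>v_{\sigma,\rho}(M)$, the point $(l_0,j_0)$ cannot lie in $E(M)$; thus $f_{l_0,i_0;j_0}(M)=0$, so $f_{l_0,i_0;j_0}(L+M)=f_{l_0,i_0;j_0}(L)\neq 0$, placing $(l_0,j_0)\in E(L+M)$. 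This forces $v_{\sigma,\rho}(L+M)\ge\sigma l_0+\rho j_0=v_{\sigma,\rho}(L)$, and combined with step one yields equality. The same argument shows $E(L+M,\sigma,\rho)=E(L,\sigma,\rho)$ and that the coefficients coincide on this set, which is precisely item 2.

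Finally for item 3, set $v:=v_{\sigma,\rho}(L)=v_{\sigma,\rho}(M)=v_{\sigma,\rho}(L+M)$. The set of $(l,j)$ with $\sigma l+\rho j=v$ appearing in any of $E(L,\sigma,\rho)$, $E(M,\sigma,\rho)$, or $E(L+M,\sigma,\rho)$ is contained in $E(L,\sigma,\rho)\cup E(M,\sigma,\rho)$ (by the inclusion of step one, intersected with the top line). On this finite union, at each $(l,j,i)$, linearity gives $f_{l,i;j}(L+M)=f_{l,i;j}(L)+f_{l,i;j}(M)$, which is exactly the coefficient at $(l,j,i)$ in the formal sum $f_{\sigma,\rho}(L)+f_{\sigma,\rho}(M)$. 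Points on the top line at which this sum cancels simply drop out of $E(L+M,\sigma,\rho)$ without affecting either side of the asserted equality (both contribute zero). The only mild subtlety is checking that nothing outside the top line can appear in $f_{\sigma,\rho}(L+M)$, but this is immediate from the equality $v_{\sigma,\rho}(L+M)=v$. I do not foresee a significant obstacle: the lemma is essentially the statement that $v_{\sigma,\rho}$ is a non-Archimedean valuation-like function and that $f_{\sigma,\rho}$ is its associated leading-form map, and everything reduces to coefficient-wise linearity in the G-form expansion.
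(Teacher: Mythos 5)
Your proposal follows essentially the same route as the paper's proof: everything reduces to $\tilde{K}$-linearity of the G-form coefficient functionals, and the inclusion $E(L+M)\subseteq E(L)\cup E(M)$ is a clean way to package what the paper phrases via $Sdeg_A((L+M)_j)\le\max\{Sdeg_A(L_j),Sdeg_A(M_j)\}$ for each homogeneous degree $j$. Items 1 (inequality) and 3 are handled correctly.

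There is, however, a genuine (if easily repaired) gap in your argument for the equality in item 1 and for item 2. You ``pick any $(l_0,j_0)\in E(L,\sigma,\rho)$,'' but this set can be empty: the paper explicitly warns (just after Definition \ref{D:homogeneous (highest) term}) that $E(H,\sigma,\rho)$ may be empty even when $v_{\sigma,\rho}(H)<\infty$, because $v_{\sigma,\rho}$ is defined as a supremum over the possibly infinite set $E(H)$ and that supremum need not be attained (for instance if $(\sigma,\rho)$ is irrational). When $E(L,\sigma,\rho)=\emptyset$ your proof breaks at the very first step. The paper avoids this by an $\varepsilon$-argument: for each homogeneous degree $j$ the maximum of $l$ over $E(L_j)$ \emph{is} attained (since $L_j$ is a polynomial), so one chooses $j$ with $\sigma\, Sdeg_A(L_j)+\rho j>v_{\sigma,\rho}(L)-\varepsilon$, takes $\varepsilon$ small enough that this also exceeds $v_{\sigma,\rho}(M)$, observes that the corresponding vertex survives in $L+M$, and lets $\varepsilon\to 0$. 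The fix to your write-up is the same: replace ``a point on the top line'' by ``a point of $E(L)$ with $\sigma l_0+\rho j_0>\max\{v_{\sigma,\rho}(M),\,v_{\sigma,\rho}(L)-\varepsilon\}$'' and pass to the limit. For item 2 the conclusion $f_{\sigma,\rho}(L+M)=f_{\sigma,\rho}(L)$ then follows even when both sides are $0$: if $(l,j)\in E(L+M,\sigma,\rho)$ then, via your inclusion and the already-established equality $v_{\sigma,\rho}(L+M)=v_{\sigma,\rho}(L)$, one gets $(l,j)\in E(L,\sigma,\rho)$, so either both sets are empty or they coincide with matching coefficients.
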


\begin{proof}
	1. Obviously, for any operator $P$ from formulation we have 
	$$
v_{\sigma,\rho}(P)\ge \sigma \max\{l | (l,j)\in E(P_j)\} + \rho j
	$$
	for any $j\in \dz$. 
	Next, note that for any fixed $j\in \dz$
$$
	\max\{l |(l,j)\in E(L_j+M_j)\}\leq \max \{\max\{l| (l,j)\in E(L_j)\},\max\{l| (l,j)\in E(M_j)\}\} .
$$	

	Let, say,  $v_{\sigma,\rho}(L)=\max\{v_{\sigma,\rho}(L),v_{\sigma,\rho}(M)\}$. Then for any $j\in \dz$
	\begin{multline*}
v_{\sigma,\rho}(L)\ge \sigma \max \{\max\{l| (l,j)\in E(L_j)\},\max\{l| (l,j)\in E(M_j)\}\} + \rho j \ge \\ 
\sigma \max\{l |(l,j)\in E(L_j+M_j)\} + \rho j,
	\end{multline*}
	hence $v_{\sigma,\rho}(L)\ge v_{\sigma,\rho}(L+M)$. 
	
	If, say, $v_{\sigma,\rho}(L)> v_{\sigma,\rho}(M)$, then $\forall \varepsilon >0$ there exist $j\in \dz$ such that 
	$$\sigma \max\{l| (l,j)\in E(L_j)\} + \rho j> v_{\sigma,\rho}(L) -\varepsilon ,$$ 
	and if $\varepsilon$ is sufficiently small, then  $\sigma \max\{l| (l,j)\in E(L_j)\} + \rho j> v_{\sigma,\rho}(M)$. Then for such $\varepsilon$ and $j$ we have $v_{\sigma,\rho}(L_j+M_j)=v_{\sigma,\rho}(L_j)$ and 
	$v_{\sigma,\rho}(L+M)\ge v_{\sigma,\rho}(L_j+M_j)> v_{\sigma,\rho}(L) -\varepsilon$, whence $v_{\sigma,\rho}(L)= v_{\sigma,\rho}(L+M)$.
	
	2. It's obvious.
	
	3. Just by the definition of $f_{\sigma,\rho}$.
\end{proof}

\begin{cor}
	\label{C:H1 H3 decomposition}
	Suppose $0\neq H\in \hat{D}^{sym}\hat{\otimes}_K \tilde{K}$, with all homogeneous components in $H$ are HCPs from $Hcpc(k)$, suppose $(\sigma,\rho)$ is a real  pair with $\sigma\ge 0$, $\rho >0$ and $v_{\sigma , \rho}(H)<\infty$. Suppose $H_1=f_{\sigma,\rho}(H)$, $H_2=H-H_1$. Then one of the following is true:
	\begin{enumerate}
		\item $v_{\sigma,\rho}(H_2)<v_{\sigma,\rho}(H)$;
		\item $v_{\sigma,\rho}(H_2)=v_{\sigma,\rho}(H)$ but $f_{\sigma,\rho}(H_2)=0$.
	\end{enumerate} 
\end{cor}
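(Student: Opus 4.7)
The statement asks us to compare $H_2 = H - H_1$ against $H$ with respect to the weight $v_{\sigma,\rho}$ and the top-form $f_{\sigma,\rho}$, so the whole argument is a bookkeeping statement about how the supporting set $E(\cdot)$ changes under the subtraction of $H_1 = f_{\sigma,\rho}(H)$. The plan is to identify $E(H_2)$ explicitly as $E(H) \setminus E(H,\sigma,\rho)$, and then read off the dichotomy from the definition of $v_{\sigma,\rho}$ as a supremum that may or may not be attained.

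First I would dispose of the trivial case $E(H,\sigma,\rho) = \emptyset$. By definition of $f_{\sigma,\rho}$ this forces $H_1 = 0$, hence $H_2 = H$, hence $v_{\sigma,\rho}(H_2) = v_{\sigma,\rho}(H)$ and $f_{\sigma,\rho}(H_2) = f_{\sigma,\rho}(H) = 0$, which is case~2 of the corollary.

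Next, in the nontrivial case $E(H,\sigma,\rho) \neq \emptyset$, I would observe directly from the G-form definition that the coefficients of $H_1$ at a point $(l,j)$ coincide with those of $H$ when $(l,j) \in E(H,\sigma,\rho)$, and are zero otherwise. Hence the coefficients of $H_2 = H - H_1$ vanish precisely on $E(H,\sigma,\rho)$ and agree with those of $H$ elsewhere, giving the key identity
\[
E(H_2) \;=\; E(H) \setminus E(H,\sigma,\rho).
\]
Every $(l,j) \in E(H_2)$ therefore satisfies $\sigma l + \rho j < v_{\sigma,\rho}(H)$, so by taking the supremum we get $v_{\sigma,\rho}(H_2) \le v_{\sigma,\rho}(H)$.

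Finally, I would apply the obvious dichotomy. If the inequality is strict, that is case~1. If it is an equality, then the value $v_{\sigma,\rho}(H_2) = v_{\sigma,\rho}(H)$ is a supremum over $E(H_2)$ none of whose elements attain it (each one is strictly below), so $E(H_2,\sigma,\rho) = \emptyset$ and therefore $f_{\sigma,\rho}(H_2) = 0$ by the definition recalled in the excerpt; this is case~2. I do not foresee a main obstacle here: the only point requiring care is checking the identity $E(H_2) = E(H) \setminus E(H,\sigma,\rho)$, which is an immediate consequence of how $f_{\sigma,\rho}(H)$ is built coefficientwise, and the fact that $v_{\sigma,\rho}$ is defined as a supremum that need not be attained even when finite is exactly what produces the two alternatives.
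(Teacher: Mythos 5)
Your proof is correct and follows essentially the same route as the paper's: dispose of the case $H_1=0$ separately, observe that no point of $E(H_2)=E(H)\setminus E(H,\sigma,\rho)$ attains the supremum $v_{\sigma,\rho}(H)$, and read off the dichotomy from the definitions of $v_{\sigma,\rho}$ and $f_{\sigma,\rho}$. The only cosmetic difference is that you derive $v_{\sigma,\rho}(H_2)\le v_{\sigma,\rho}(H)$ directly from the inclusion $E(H_2)\subseteq E(H)$, whereas the paper invokes Lemma \ref{L:New Dixmier 2.4} item 1 for that inequality.
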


\begin{proof}
	If $H_1\neq 0$, then  $v_{\sigma,\rho}(H)=v_{\sigma,\rho}(H_1)$, so by Lemma \ref{L:New Dixmier 2.4} item 1, we know $v_{\sigma,\rho}(H_2)\leq v_{\sigma,\rho}(H)$. If $H_1=0$, the equality holds.
	
    If $v_{\sigma,\rho}(H_2)=v_{\sigma,\rho}(H)$, then by the definition of $H_1$ we know there doesn't exist $(l,j)\in E(H_2)$, such that $\sigma l+\rho j=v_{\sigma,\rho}(H)=v_{\sigma,\rho}(H_2)$, so by the definition of $f_{\sigma,\rho}$ we get $f_{\sigma,\rho}(H_2)=0$. 
\end{proof}

We now want to estimate $v_{\sigma,\rho}(LM)$ and $v_{\sigma,\rho}([L,M])$ with the help of $v_{\sigma,\rho}(L)$ and $v_{\sigma,\rho}(M)$ (cf. similar estimations for $L,M\in A_1$ in \cite[L.2.7]{Dixmier}).   We consider first the case when $L,M$ are monomials from $Hcpc(k)$.

\begin{lemma}
	\label{L:New Dixmier 2.7 two monomial case}
	Suppose $L,M\in \hat{D}_1^{sym}\hat{\otimes}_K \tilde{K}$ are two monomial operators from $Hcpc(k)$, suppose $(\sigma,\rho)$ is a real pair with $\sigma\ge 0$, $\rho >0$. Then 
	\begin{enumerate}
		\item $v_{\sigma,\rho}(LM)= v_{\sigma,\rho}(L)+v_{\sigma,\rho}(M)$.
		\item $v_{\sigma,\rho}([L,M])\leq v_{\sigma,\rho}(L)+v_{\sigma,\rho}(M)$.
		In the following cases we have more precise estimation:
		\begin{enumerate}
			\item In the case of $L$ and $M$ don't contain $A_i$, then 
			$$
			v_{\sigma,\rho}([L,M])\leq v_{\sigma,\rho}(L)+v_{\sigma,\rho}(M)-\sigma ;
			$$
			\item Suppose one of $L,M$ is $g\partial^b$, where $b=ck$, $c\in \mathbb{N}$, $g\in K$. Then 
			$$
			v_{\sigma,\rho}([L,M])\leq v_{\sigma,\rho}(L)+v_{\sigma,\rho}(M)-\sigma .
			$$
	\end{enumerate}
\end{enumerate}
\end{lemma}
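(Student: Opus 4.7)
The plan is to reduce both items to an explicit product-in-$G$-form calculation using three commutation identities in $Hcpc(k)$. The identities are: $\partial^n\Gamma_l = (\Gamma_1+n)^l\partial^n$ (from $[\Gamma_1,\partial]=-\partial$ by induction on $n$); $\partial^n A_i = \xi^{in}A_i\partial^n$ (from $A_i(x^m) = \xi^{im}x^m$ or the direct power-series computation $\partial A_i = \xi^i A_i\partial$); and $[\Gamma_l,A_i]=0$ together with $A_{i_1}A_{i_2}=A_{(i_1+i_2)\bmod k}$. These extend to $n<0$ via $\int = \partial^{-1}$. The $B_j$-monomial cases will be handled separately and trivially: by definition $E(gB_jD^r)=\emptyset$, so $v_{\sigma,\rho}=-\infty$, and a direct inspection of the shape of $B_j$ shows that products involving a $B_j$-monomial stay in the $B$-ideal (no $\Gamma_lA_i$-monomials are produced), making both sides of items~1 and~2 equal to $-\infty$.

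It then remains to treat $L=\alpha\Gamma_{l_1}A_{i_1}D^{r_1}$, $M=\beta\Gamma_{l_2}A_{i_2}D^{r_2}$ with $\alpha,\beta\neq 0$. Pushing $D^{r_1}$ past $\Gamma_{l_2}A_{i_2}$ via the above identities gives
$$
LM \;=\; \alpha\beta\,\xi^{i_2 r_1}\,\Gamma_{l_1}(\Gamma_1+r_1)^{l_2}\,A_{i_1+i_2}\,D^{r_1+r_2},
$$
and expanding $(\Gamma_1+r_1)^{l_2}$ and using $\Gamma_{l_1}\Gamma_1^{s}=\Gamma_{l_1+s}$ yields a sum over $s=0,\ldots,l_2$ of $\Gamma_{l_1+s}$-monomials, the top $\Gamma$-index $l_1+l_2$ carrying the non-zero coefficient $\alpha\beta\xi^{i_2 r_1}$. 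Hence $v_{\sigma,\rho}(LM)=\sigma(l_1+l_2)+\rho(r_1+r_2)=v_{\sigma,\rho}(L)+v_{\sigma,\rho}(M)$, proving item~1. For item~2, the same computation applied to $ML$, combined with item~1 and Lemma~\ref{L:New Dixmier 2.4}(1), gives the basic bound $v_{\sigma,\rho}([L,M])\le v_{\sigma,\rho}(L)+v_{\sigma,\rho}(M)$.

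The sharpened bounds in 2(a) and 2(b) come from exhibiting cancellation of the leading $\Gamma_{l_1+l_2}$-monomial in $LM-ML$. In case (a), $A_{i_1}=A_{i_2}=1$, so
$$
LM-ML \;=\; \alpha\beta\bigl[\Gamma_{l_1}(\Gamma_1+r_1)^{l_2}-\Gamma_{l_2}(\Gamma_1+r_2)^{l_1}\bigr]D^{r_1+r_2};
$$
the leading $\Gamma$-terms of both products equal $\alpha\beta\Gamma_{l_1+l_2}$ and cancel, so the remaining $\Gamma$-degree is $\le l_1+l_2-1$, giving $v_{\sigma,\rho}([L,M])\le v_{\sigma,\rho}(L)+v_{\sigma,\rho}(M)-\sigma$. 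In case (b), WLOG $M=g\partial^b$ with $b=ck$, so $l_2=i_2=0$ and $\xi^{i_1 b}=(\xi^k)^{i_1 c}=1$, giving
$$
LM=\alpha g\,\Gamma_1^{l_1}A_{i_1}D^{r_1+b}, \qquad ML=\alpha g\,(\Gamma_1+b)^{l_1}A_{i_1}D^{r_1+b},
$$
so $[L,M]=\alpha g\,[\Gamma_1^{l_1}-(\Gamma_1+b)^{l_1}]A_{i_1}D^{r_1+b}$, whose top $\Gamma$-degree is $l_1-1$, yielding the claimed sharp bound.

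The main technical obstacle I expect is handling $r_1<0$ (or $r_2<0$) uniformly. The commutation identities do extend to $n<0$, but one must stay within $Hcpc(k)$ (in particular, avoid straying into genuine pseudo-differential territory) while justifying that the shift-by-$n$ formula still produces HCP-style expressions in G-form that can be read off by $E(\cdot)$. Once this extension is carefully spelled out, all the arguments above run uniformly in the signs of $r_1, r_2$; the key observation in every case is that the top $\Gamma_1$-degree of a product is additive, and that in cases 2(a)-2(b) the leading $\Gamma_1$-coefficients of $LM$ and $ML$ coincide and thus cancel.
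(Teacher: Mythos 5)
Your proposal is correct and follows essentially the same route as the paper: push $D^r$ past $\Gamma_l A_i$ to put $LM$ in $G$-form, read off the nonvanishing top $\Gamma$-degree coefficient for item~1, and then exhibit cancellation of the leading $\Gamma_{l_1+l_2}$-term in $LM-ML$ for items~2(a) and~2(b); the paper likewise dispatches the $B_j$-only monomials first by appeal to its formulae (2.6)--(2.9) from the companion paper, which is also what settles the $r<0$ concern you raise. (Incidentally your leading coefficient $\alpha\beta(l_2r_1-l_1r_2)$ in case~2(a) is the correct one; the paper's displayed $(u-v)$ is a harmless typo for $(nu-mv)$.)
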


\begin{proof} If $Sdeg_A(L)= -\infty$ or $Sdeg_A(M)= -\infty$, then $L$ or $M$ depends only on $B_j$, so $LM$ and $ML$ depends only on $B_j$ by formulae (2.6-2.9) from \cite[Lem. 2.10]{GZ24}, and therefore $v_{\sigma ,\rho}(LM)=-\infty$, $v_{\sigma ,\rho}([L,M])=-\infty$, and all statements of lemma are trivial. So, we can assume below that $Sdeg_A(L,M)\neq -\infty$.

	1. Suppose 
	$$
	L=a_{i_1,m}\Gamma_mA_{i_1}D^u, M=a_{i_2,n}\Gamma_nA_{i_2}D^v
	$$ 
	Then 
	\begin{equation}
		\label{E:LM}
		LM=a_{i_1,m}a_{i_2,n}\xi^{ui_2}\sum_{t=0}^{n}\binom{n}{t}u^{n-t}\Gamma_{t+m}A_{i_1+i_2}D^{u+v}+\ldots ,
	\end{equation}
where $\ldots$ here and below in the proof mean terms containing $B_j$ (although this equation may contain terms with $B_j$,  here we are discussing $v_{\sigma,\rho}$, so we don't have to write them out, and for convenience we will always forget about that in the following).
	
	Hence we know $v_{\sigma,\rho}(LM)=\sup\{(l,j)\in E(LM)\}=(m+n)\sigma+(u+v)\rho=v_{\sigma,\rho}(L)+v_{\sigma,\rho}(M)$.
	
	2. Since $v_{\sigma,\rho}(LM)=v_{\sigma,\rho}(ML)=v_{\sigma,\rho}(L)+v_{\sigma,\rho}(M)$, by lemma \ref{L:New Dixmier 2.4} we know $v_{\sigma,\rho}([L,M])\leq v_{\sigma,\rho}(L)+v_{\sigma,\rho}(M)$.
	
	Now consider the precise estimation:
	If $L,M$ both don't contain $A_i$, assume 
	$$
	L=a_1\Gamma_mD^u, M=a_2\Gamma_nD^v
	$$
	then 
	$$
	\begin{cases}
		LM=a_1a_2\sum_{t=0}^{n}\binom{n}{t}u^{n-t}\Gamma_{t+m}D^{u+v}+\ldots 
		\\ML=a_1a_2\sum_{t=0}^{m}\binom{m}{t}v^{m-t}\Gamma_{t+n}D^{u+v} +\ldots 
	\end{cases}
	$$
	Hence
	$$
	[L,M]=a_1a_2(u-v)\Gamma_{m+n-1}D^{u+v}+\cdots ,
	$$
	where $\ldots$ mean terms with the value of $v_{\sigma ,\rho}$ less than $(m+n-1)\sigma +(u+v)\rho =v_{\sigma,\rho}(L)+v_{\sigma,\rho}(M)-\sigma$. 
	Thus $v_{\sigma,\rho}([L,M])\leq v_{\sigma,\rho}(L)+v_{\sigma,\rho}(M)-\sigma$.
	
	If one of $L,M$ is $g\partial^{b}$, say, $L=a_1\Gamma_mA_iD^u$, $M=g\partial^{ck}$, then 
	$$
	\begin{cases}
		LM=a_1g\Gamma_mA_iD^{u+ck} + \ldots 
		\\ML=a_1g\sum_{t=0}^{m}\binom{m}{t}(ck)^{m-t}\Gamma_{t}A_iD^{u+ck}
	\end{cases}
	$$
	Hence
	$$
	[L,M]=-a_1gmckA_i\Gamma_{m-1}D^{u+ck}
	$$
	Thus $v_{\sigma,\rho}([L,M])\leq v_{\sigma,\rho}(L)+v_{\sigma,\rho}(M)-\sigma$.
\end{proof}

Now we come to the general case:
\begin{lemma}
	\label{L:New Dixmier 2.7}
	 Suppose $L,M\in \hat{D}_1^{sym}\hat{\otimes}_K \tilde{K}$ are two operators such that all homogeneous components $L_i, M_i$ are HCPs from $Hcpc(k)$, suppose $(\sigma,\rho)$ is a real pair with $\sigma\ge 0$, $\rho >0$, and $v_{\sigma,\rho}(L),v_{\sigma,\rho}(M)<\infty$. Then 
	\begin{enumerate}
		\item For any $(l,j)\in E(LM)$, there exists $(m,u)\in E(L)$ and $(n,v)\in E(M)$, such that 
		$$
		l\leq m+n, \quad j\leq u+v
		$$ 
		\item $v_{\sigma,\rho}(LM)\leq v_{\sigma,\rho}(L)+v_{\sigma,\rho}(M)$. The equality holds if one of the following case is true:
		\begin{enumerate}
			\item $f_{\sigma,\rho}(L)\neq 0,f_{\sigma,\rho}(M)\neq 0$, with $f_{\sigma,\rho}(L)$ and $f_{\sigma,\rho}(M)$ don't contain $A_i$.
			\item $f_{\sigma,\rho}(L)\neq 0,f_{\sigma,\rho}(M)=0$, with $f_{\sigma,\rho}(L)$ doesn't contain $A_i$ and $\exists \epsilon>0$ such that all points $(l,j)\in E(M)$ with $\sigma l+\rho j>v_{\sigma,\rho}(M)-\epsilon$ don't contain $A_i$.
			\item $f_{\sigma,\rho}(L)=0,f_{\sigma,\rho}(M)\neq 0$ with $f_{\sigma,\rho}(M)$ doesn't contain $A_i$ and $\exists \epsilon>0$ such that all points $(l,j)\in E(L)$ with $\sigma l+\rho j>v_{\sigma,\rho}(L)-\epsilon$ don't contain $A_i$.
			\item $f_{\sigma,\rho}(L)=0,f_{\sigma,\rho}(M)=0$, and $\exists \epsilon>0$ such that all points $(l,j)\in E(L)$ with $\sigma l+\rho j>v_{\sigma,\rho}(L)-\epsilon$ don't contain $A_i$ and all points $(l,j)\in E(M)$ with $\sigma l+\rho j>v_{\sigma,\rho}(M)-\epsilon$ don't contain $A_i$.
		\end{enumerate}
		\item $v_{\sigma,\rho}([L,M])\leq v_{\sigma,\rho}(L)+v_{\sigma,\rho}(M)$
		
		In the following cases we have more precise estimation:
		\begin{enumerate}
			\item In the case of $L$ and $M$ don't contain $A_i$, then 
			$$
			v_{\sigma,\rho}([L,M])\leq v_{\sigma,\rho}(L)+v_{\sigma,\rho}(M)-\sigma
			$$
		  \item Suppose $M=g\partial^n$, where $n=mk$, $m\in \mathbb{N}$, $g\in K$. Then 
		  $$
		  v_{\sigma,\rho}([L,M])\leq v_{\sigma,\rho}(L)+v_{\sigma,\rho}(M)-\sigma
		  $$
	    \end{enumerate}
	\end{enumerate}
\end{lemma}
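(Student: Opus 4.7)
The strategy is to reduce everything to the monomial case treated in Lemma \ref{L:New Dixmier 2.7 two monomial case}, using the decomposition $L = \sum_u L_u$, $M = \sum_v M_v$ into homogeneous components (each a finite sum of monomials in $Hcpc(k)$) and the bilinear expansion $LM = \sum_{u,v} L_u M_v$.

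For part 1, fix $(l, j) \in E(LM)$. Then the coefficient at $\Gamma_l A_i D^j$ in the homogeneous component $(LM)_j = \sum_{u+v=j} L_u M_v$ is non-zero for some $i$, so at least one product $L_u M_v$ with $u+v=j$ contributes non-trivially at this position. Applying formula \eqref{E:LM} to a pair of monomials $\alpha \Gamma_{m_0} A_{i_1} D^u$ from $L_u$ and $\beta \Gamma_{n_0} A_{i_2} D^v$ from $M_v$, the result is supported at positions $(t + m_0,\, u+v)$ with $0 \le t \le n_0$; a non-zero contribution at $(l, j)$ forces $l \le m_0 + n_0$ and $j = u+v$. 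Taking $(m, u) := (m_0, u) \in E(L)$ and $(n, v) := (n_0, v) \in E(M)$ yields the claim.

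The inequality in part 2 is immediate from part 1: any $(l, j) \in E(LM)$ satisfies $\sigma l + \rho j \le \sigma(m+n) + \rho(u+v) \le v_{\sigma,\rho}(L) + v_{\sigma,\rho}(M)$. For the equality in case (a), I would take the extremal points $(l_0, j_0) \in E(L, \sigma, \rho)$ and $(l_0', j_0') \in E(M, \sigma, \rho)$ with maximal $\Gamma$-coordinate on their respective top lines. Since $f_{\sigma,\rho}(L)$ and $f_{\sigma,\rho}(M)$ are $A_i$-free by hypothesis, formula \eqref{E:LM} specialised to $i_1 = i_2 = 0$ shows that only the extremal monomial pair contributes a term at position $(l_0 + l_0',\, j_0 + j_0')$, with non-zero coefficient $\alpha_{l_0, j_0}\beta_{l_0', j_0'}$; any other top-line pair $(l, j) \in E(L,\sigma,\rho)$, $(m, n) \in E(M, \sigma, \rho)$ would need $l + t = l_0 + l_0'$ with $t \le m$, which combined with $l \le l_0$ and $m \le l_0'$ forces the extremal choice. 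Non-top-line contributions have strictly smaller weight. Cases (b)--(d) follow by the same extremal-index argument applied in an $\epsilon$-neighborhood of the top line, which is $A_i$-free by hypothesis, producing positions $(l, j) \in E(LM)$ with weight arbitrarily close to $v_{\sigma,\rho}(L) + v_{\sigma,\rho}(M)$.

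For part 3, the bound $v_{\sigma,\rho}([L, M]) \le v_{\sigma,\rho}(L) + v_{\sigma,\rho}(M)$ follows from part 2 together with Lemma \ref{L:New Dixmier 2.4}(1), applied to $[L,M] = LM - ML$. For the sharper estimate $-\sigma$ in cases (a) and (b), I would expand $[L, M] = \sum_{u, v} [L_u, M_v]$ as a sum of monomial commutators, apply the corresponding estimate from Lemma \ref{L:New Dixmier 2.7 two monomial case}(2) to each summand, and conclude via Lemma \ref{L:New Dixmier 2.4}(1). The main obstacle is the equality assertion in the weaker cases (b)--(d) of part 2, where $f_{\sigma,\rho}(L)$ or $f_{\sigma,\rho}(M)$ vanishes and the supremum weight is not attained; here one must control how points arbitrarily close to (but not on) the top line interact after multiplication. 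The $A_i$-freeness condition in a neighborhood of the top line is precisely what prevents destructive cancellations coming from the $\xi^{u i_2}$ factors in formula \eqref{E:LM}, so the extremal-index argument from case (a) still isolates a non-vanishing contribution to $E(LM)$ of weight arbitrarily close to $v_{\sigma,\rho}(L) + v_{\sigma,\rho}(M)$.
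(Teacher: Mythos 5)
Your proposal is correct and takes essentially the same route as the paper: reduce to the monomial product formula (Lemma \ref{L:New Dixmier 2.7 two monomial case}), deduce part~1 and the inequality in part~2, then for the equality pick an extremal vertex on the top line and check that the only non-cancelling contribution to that extremal position of $E(LM)$ comes from the pair of extremal monomials (with the $A_i$-freeness hypothesis guaranteeing a non-zero leading coefficient), and for cases (b)--(d) run the same argument in an $\epsilon$-neighborhood of the top line.

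The one genuine (if cosmetic) deviation is your choice of extremal vertex: you pick the top-line vertex of \emph{maximal} $\Gamma$-coordinate, whereas the paper picks the vertex of maximal $\partial$-coordinate, i.e.\ \emph{minimal} $\Gamma$-coordinate, at the point $(m_0,u_0)$ with $u_0=\sup\{u : (m,u)\in E(f_{\sigma,\rho}(L))\}$. Both choices work: with the paper's choice one bounds competing contributions directly via the $\partial$-degree (which is strictly additive under multiplication, so other top-line monomials with $v<v_0$ can never reach $\partial$-degree $u_0+v_0$), and for off-top-line contributions one invokes item~1; with your choice the bound comes from $l\le l_0$, $t\le m\le l_0'$ together with a weight estimate for off-top-line pairs, which requires $\sigma>0$ to run exactly as you state it (for $\sigma=0$ the top line is horizontal and the argument needs a small separate remark via $\partial$-degree additivity — the paper's parametrization by $u_0$ tacitly has the same issue). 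Your treatment of part~3 (directly summing the monomial commutator estimates and taking a supremum, rather than the paper's proof by contradiction) is a further minor stylistic difference; note that each $[L_u,M_v]$ is itself a finite sum of monomial commutators, so one more decomposition step is needed before Lemma \ref{L:New Dixmier 2.7 two monomial case}(2) applies.
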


\begin{proof} If $E(LM)=\emptyset$, there is nothing to prove. So, we can assume $E(LM)\neq \emptyset$. In this case $E(L)\neq \emptyset$ and $E(M)\neq \emptyset$, since otherwise $L$ or $M$ would contain only monomials with $B_j$, and then $LM$ would contain also only monomials with $B_j$ according to formulae (2.6-2.9) from \cite[Lem. 2.10]{GZ24}, i.e. $E(LM)=\emptyset$, a contradiction. 

	1. 	Suppose the result is not true, hence there exists $(l_0,j_0)\in E(LM)$, but for any $(m,u)\in E(L)$ and $(n,v)\in E(M)$, whether $l_0>m+n$ or $j_0>u+v$ holds. Assume $L_0,M_0$ are monomial elements in $L,M$, $L_0=a_{m,i_1;u}\Gamma_mA_{i_1}D^u$, $M_0=a_{n,i_2;v}\Gamma_nA_{i_2}D^v$ (obviously, it's sufficient to consider only monomials corresponding to points $(m,n)$, $(u,v)$). Then like in equation \ref{E:LM} (Lemma \ref{L:New Dixmier 2.7 two monomial case} item 1) we have 
	\begin{equation}
	\label{E:L_0M_0}
	L_0M_0=a_{i_1,m}a_{i_2,n}\xi^{ui_2}\sum_{t=0}^{n}\binom{n}{t}u^{n-t}\Gamma_{t+m}A_{i_1+i_2}D^{u+v} +\ldots 
	\end{equation}
	Hence for any $(l,j)\in E(L_0M_0)$, $l\leq m+n$ and $j\leq u+v$. This means $(l_0,j_0)\notin E(L_0M_0)$ for any monomials of $L,M$, so $(l_0,j_0)\notin E(LM)$, this is a contradiction.
	
	2. We know $v_{\sigma,\rho}(LM)=\sup \{\sigma l+\rho j|(l,j)\in E(LM) \}$. Thus for any $\epsilon>0$, there exists $(l,j)\in E(LM)$, such that $v_{\sigma,\rho}(LM)< \sigma l+\rho j+\epsilon$. According to item 1, there exist $(m,u)\in E(L)$ and $(n,v)\in E(M)$, such that 
	$l\leq m+n$ and $j\leq u+v$, thus we have
	$$
	v_{\sigma,\rho}(LM)< \sigma l+\rho j+\epsilon\leq (\sigma m+\rho u)+(\sigma n+\rho v)+\epsilon\leq v_{\sigma,\rho}(L)+v_{\sigma,\rho}(M)+\epsilon
	$$
	So, we get $v_{\sigma,\rho}(LM)\leq v_{\sigma,\rho}(L)+v_{\sigma,\rho}(M)$.
	
	Now lets discuss when  the equality holds:
	
		(a) $f_{\sigma,\rho}(L)\neq 0,f_{\sigma,\rho}(M)\neq 0$
		
		Suppose $u_0=\sup\{u|(m,u)\in E(f_{\sigma,\rho}(L))  \}$, and $v_0=\sup\{v|(n,v)\in E(f_{\sigma,\rho}(M))  \}$. Notice that $u_0$ is an integer and $u_0\le \frac{v_{\sigma,\rho}(L)}{\rho}$ (because $\rho>0$), so $u_0$ is well-defined, so does $v_0$. And suppose $m_0,n_0$ are the corresponding integers for $u_0$ and $v_0$, such that
		$$
		m_0\sigma +u_0\rho=v_{\sigma,\rho}(L), \quad n_0\sigma +v_0\rho=v_{\sigma,\rho}(M)
		$$ 
		Hence $(m_0,u_0)\in E(L; \sigma ,\rho)$ and $(n_0,v_0)\in E(M; \sigma ,\rho)$. Suppose $L_0=a_{0,m_0}\Gamma_{m_0}D^{u_0}, M_0=a_{0,n_0}\Gamma_{n_0}D^{v_0}$ are the monomials corresponding to the points $(m_0,u_0)\in E(L; \sigma ,\rho)$ and $(n_0,v_0)\in E(M; \sigma ,\rho)$  (they don't contain $A_i$ according to the assumptions).
		
		Now put $L_1=f_{\sigma,\rho}(L), L_2=L_1-L_0,L_3=L-L_1$, then for any $(m,u)\in E(L_2)$, we have $u<u_0$, and for any $(m,u)\in E(L_3)$, we have $m\sigma+u\rho<m_0\sigma+u_0\rho$. For the same we assume $M_1=f_{\sigma,\rho}(M), M_2=M_1-M_0, M_3=M-M_1$, for any $(n,v)\in E(M_2)$, we have $v<v_0$, and for any $(n,v)\in E(M_3)$, we have $n\sigma+v\rho<n_0\sigma+v_0\rho$. Thus we get the decomposition 
		$$
		L=L_0+L_2+L_3, \quad M=M_0+M_2+M_3
		$$
		
		Consider the following equation:
		$$
		LM=L_0M_0+L_0(M_2+M_3)+(L_2+L_3)M_0+(L_2+L_3)(M_2+M_3)
		$$
		We want to show $(m_0+n_0,u_0+v_0)\in E(LM)$. This can be true if $(m_0+n_0,u_0+v_0)\in E(L_0M_0)$, but doesn't appear in the rest three terms:
		
		By formula \eqref{E:L_0M_0} we know $(m_0+n_0,u_0+v_0)\in E(L_0M_0)$. 
		
		On the other hand, in $L_0M_2$, since for any $(n,v)\in E(M_2)$ we have $v<v_0$, thus for any $(l,j)\in E(L_0M_2)$ we have $j<v_0+u_0$, hence $(m_0+n_0,u_0+v_0)\notin E(L_0M_2)$. Thus there doesn't exist $(n,v)\in E(M_2)$ such that  
		$$
		n_0\leq n, \quad v_0\leq v .
		$$
		
		Also for $L_0M_3$, since for any $(n,v)\in E(M_3)$, we have $n\sigma+v\rho<n_0\sigma+v_0\rho$, we also have  there doesn't exist $(n,v)\in E(M_3)$ such that  
		$$
		n_0\leq n, \quad v_0\leq v .
		$$
		Then according to  item 1, we know $(m_0+n_0,u_0+v_0)\notin E(L_0(M_2+M_3))$, since, obviously, $E(M_2+M_3)\subseteq E(M_2)\cup E(M_3)$. The same arguments work for $(L_2+L_3)(M_0)$ and $(L_2+L_3)(M_2+M_3)$. So, we get 
		$$
		(m_0+n_0,u_0+v_0)\notin E(L_0(M_2+M_3))\cup E((L_2+L_3)(M_2+M_3))\cup E((L_2+L_3)M_0).
		$$
		
		Hence we have $(m_0+n_0,u_0+v_0)\in E(LM)$, this means
		$$
		v_{\sigma,\rho}(LM)\ge (m_0+n_0)\sigma+(u_0+v_0)\rho=v_{\sigma,\rho}(L)+v_{\sigma,\rho}(M)
		$$
		and together with $v_{\sigma,\rho}(LM)\leq v_{\sigma,\rho}(L)+v_{\sigma,\rho}(M)$ we get  the equality.
		
		(b) $f_{\sigma,\rho}(L)\neq 0,f_{\sigma,\rho}(M)=0$
		
		It's easy to see the equality holds iff the following is true
		$$
		v_{\frac{\sigma}{\rho},1}(LM)=v_{\frac{\sigma}{\rho},1}(L)+v_{\frac{\sigma}{\rho},1}(M)
		$$
		So here we may assume $\rho=1$.
		
		Since $f_{\sigma,\rho}(M)=0$, then for any $\epsilon>0$, there exists $(n,v)\in E(M)$, such that 
		$$
		n\sigma+v<v_{\sigma,1}(M)<n\sigma+v+\epsilon
		$$
		So we can choose 
		$$
		v_0=\sup\{v|(n,v)\in E(M), n\sigma+v>v_{\sigma,1}(M)-\epsilon \},
		$$ 
		where $\epsilon< \epsilon_0$ and $\epsilon_0$ is the number that all points $(n,v)\in E(M)$ with $\sigma n+v>v_{\sigma,\rho}(M)-\epsilon_0$ doesn't contain $A_i$ as in assumption. This $v_0$ is well defined since $\{v|(n,v)\in E(M), n\sigma+v>v_{\sigma,1}(M)-\epsilon \}$ is a non-empty set and $v<v_{\sigma,1}(M)$ always holds.
		And we choose $n_0:=\sup\{n|(n,v_0)\in E(M) \}$, it's easy to see $n_0$ is well-defined and $(n_0,v_0)$ satisfies the properties:
		\begin{enumerate}
			\item [(1)] $(n_0,v_0)\in E(M)$, with $v_{\sigma,\rho}(M)-\epsilon <n_0\sigma+v_0<v_{\sigma,1}(M) $
			\item [(2)] Suppose the monomial corresponding to  $(n_0,v_0)$  is  
			$$
			M_0=a_{n_0,v_0}\Gamma_{n_0}D^{v_0}
			$$
			$$
			M_1=\sum_{(n,v)\in E(M)| n\sigma+v>v_{\sigma,1}(M)-\epsilon }a_{n,v}\Gamma_nD^v
			$$
			($M_1$ is well-defined and it doesn't contain $A_i$). Define $M_2=M_1-M_0$ Then for any $(n,v)\in E(M_2)$, we have either $n\sigma+v\le n_0\sigma+v_0$ or $v<v_0$.
			\item [(3)] Suppose $M_3=M-M_1$, then $v_{\sigma,1}(M_2)<n_0\sigma+v_0$.
		\end{enumerate}
		
		Since $f_{\sigma,1}(L)\neq 0$, we can define $L_0,L_1,L_2,L_3$ in the same way like in (a). Then again 
		$$
		LM=L_0M_0+L_0(M_2+M_3)+(L_2+L_3)M_0+(L_2+L_3)(M_2+M_3).
		$$
		
		For the same reason we know $(m_0+n_0,u_0+v_0)\in E(LM)$, because $(m_0+n_0,u_0+v_0)\in E(L_0M_0)$, but doesn't appear in the rest three parts. Thus $(m_0+n_0,u_0+v_0)\in E(LM)$, and 
		$$
		v_{\sigma,\rho}(LM)\ge (m_0+n_0)\sigma+(u_0+v_0)\ge v_{\sigma,1}(L)+v_{\sigma,1}(M)-\epsilon .
		$$
		Together with the inequality from item 2) we get the equality. 
	
	    (c) $f_{\sigma,\rho}(L)=0,f_{\sigma,\rho}(M)\neq 0$. This case is analogous to b), so  
		we omit the details.
		
		(d) $f_{\sigma,\rho}(L)=0,f_{\sigma,\rho}(M)=0$, in this case just deal with $L,M$ like in (b), the discussion will be the same, we omit the details.

	3. The inequality is obvious in view of item 2. 
		
	3(a). Assume the converse, i.e. $v_{\sigma,\rho}([L,M])>v_{\sigma,\rho}(L)+v_{\sigma,\rho}(M)-\sigma$, then there exist $(l,j)\in E([L,M])$, such that $l\sigma+j\rho>v_{\sigma,\rho}(L)+v_{\sigma,\rho}(M)-\sigma$. 
	
	Suppose $L_0=a_{m,u}\Gamma_mD^u, M_0=a_{n,v}\Gamma_nD^v$(according to the assumptions they don't contain $A_i$) are the monomials in $L,M$. Using the calculation in Lemma \ref{L:New Dixmier 2.7 two monomial case} item 2, we have 
	$$
	[L_0,M_0]=a_{m,u}a_{n,v}(u-v)\Gamma_{m+n-1}D^{u+v}+\cdots
	$$
	This means for any $(l_0,j_0)\in E([L_0,M_0])$, 
	$$l_0\sigma+j_0\rho\leq (m+n-1)\sigma+(u+v)\rho \le v_{\sigma ,\rho} (L)+v_{\sigma,\rho} (M)-\sigma ,$$ 
	but $l\sigma+j\rho>v_{\sigma,\rho}(L)+v_{\sigma,\rho}(M)-\sigma$,this means $(l,j)\notin E([L_0,M_0])$ for any $L_0,M_0$, Hence $(l,j)\notin E([L,M])$, a contradiction.
	
	3(b) The arguments are the same as in 3(a), we omit the proof here.
\end{proof}

\begin{lemma} 
	\label{L:f sigma,rho LM}
	In the notations of lemma \ref{L:New Dixmier 2.7}, if $v_{\sigma,\rho}(LM)= v_{\sigma,\rho}(L)+v_{\sigma,\rho}(M)$, and $f_{\sigma,\rho}(LM)\neq 0$, then we have 
\begin{equation}
	\label{E:v sigma rho LM}
	v_{\sigma,\rho}[f_{\sigma,\rho}(LM)]= v_{\sigma,\rho}(L)+v_{\sigma,\rho}(M)
\end{equation}
On the other hand, if (\ref{E:v sigma rho LM}) holds and $v_{\sigma ,\rho}(L)\neq -\infty$ and $v_{\sigma ,\rho}(M)\neq -\infty$, then $f_{\sigma,\rho}(L)\neq 0, f_{\sigma,\rho}(M)\neq 0$ and $v_{\sigma,\rho}(LM)= v_{\sigma,\rho}(L)+v_{\sigma,\rho}(M)$.	
\end{lemma}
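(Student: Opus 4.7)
The plan is to derive both directions by unwinding the definition of $f_{\sigma,\rho}(LM)$ and invoking Lemma \ref{L:New Dixmier 2.7}(1), which constrains points of $E(LM)$ in terms of points of $E(L)$ and $E(M)$.

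For the forward implication, the key observation is that by the very definition of $f_{\sigma,\rho}$ one has $E(f_{\sigma,\rho}(LM)) \subseteq E(LM,\sigma,\rho)$, so every point $(l,j)$ on which $f_{\sigma,\rho}(LM)$ is supported satisfies $\sigma l + \rho j = v_{\sigma,\rho}(LM)$. Since $f_{\sigma,\rho}(LM) \neq 0$ by hypothesis, there is at least one such point, and hence
\[
v_{\sigma,\rho}[f_{\sigma,\rho}(LM)] = v_{\sigma,\rho}(LM) = v_{\sigma,\rho}(L) + v_{\sigma,\rho}(M).
\]
This direction is essentially bookkeeping.

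For the converse, (\ref{E:v sigma rho LM}) together with the hypothesis $v_{\sigma,\rho}(L), v_{\sigma,\rho}(M) \neq -\infty$ forces $f_{\sigma,\rho}(LM) \neq 0$, since otherwise $v_{\sigma,\rho}[f_{\sigma,\rho}(LM)] = -\infty$ would contradict finiteness of the right-hand side. Applying the previous argument then yields $v_{\sigma,\rho}(LM) = v_{\sigma,\rho}(L) + v_{\sigma,\rho}(M)$, which is the third claim. It remains to show $f_{\sigma,\rho}(L) \neq 0$ and, symmetrically, $f_{\sigma,\rho}(M) \neq 0$. Pick any $(l,j) \in E(f_{\sigma,\rho}(LM))$, which is non-empty by the above. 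Lemma \ref{L:New Dixmier 2.7}(1) supplies $(m,u) \in E(L)$ and $(n,v) \in E(M)$ with $l \leq m+n$ and $j \leq u+v$, so
\[
(\sigma m + \rho u) + (\sigma n + \rho v) \geq \sigma l + \rho j = v_{\sigma,\rho}(L) + v_{\sigma,\rho}(M),
\]
while each summand on the left is at most the corresponding weight. Both inequalities must therefore be equalities, giving $(m,u) \in E(L,\sigma,\rho)$ and $(n,v) \in E(M,\sigma,\rho)$; in particular $f_{\sigma,\rho}(L) \neq 0$ and $f_{\sigma,\rho}(M) \neq 0$.

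There is no serious obstacle here; the statement is a direct consequence of the definitions combined with Lemma \ref{L:New Dixmier 2.7}(1). The one small subtlety is that $f_{\sigma,\rho}(LM) \neq 0$ genuinely implies $E(f_{\sigma,\rho}(LM)) \neq \emptyset$, which holds because $f_{\sigma,\rho}(LM)$ is defined as a sum of $\Gamma_l A_{k,i} D^j$-monomials indexed by $E(LM,\sigma,\rho)$ (with no $B_j$-contributions), so non-vanishing of the operator is equivalent to non-emptiness of its support.
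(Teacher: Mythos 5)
Your proof is correct and follows essentially the same route as the paper's: the forward direction is the same bookkeeping observation, and the converse direction first extracts $f_{\sigma,\rho}(LM)\neq 0$, then applies Lemma~\ref{L:New Dixmier 2.7}(1) and the inequality comparison exactly as the paper does to conclude $f_{\sigma,\rho}(L), f_{\sigma,\rho}(M)\neq 0$. The only cosmetic difference is that the paper obtains the intermediate equality $v_{\sigma,\rho}(LM)=v_{\sigma,\rho}(L)+v_{\sigma,\rho}(M)$ by invoking Corollary~\ref{C:H1 H3 decomposition} and Lemma~\ref{L:New Dixmier 2.4}, whereas you get it more directly from the observation that a nonzero $f_{\sigma,\rho}(LM)$ has weight equal to $v_{\sigma,\rho}(LM)$.
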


\begin{proof}
	 If $v_{\sigma,\rho}(LM)= v_{\sigma,\rho}(L)+v_{\sigma,\rho}(M)$ and $f_{\sigma,\rho}(LM)\neq 0$, then  there exist $(l,j)\in E(f_{\sigma,\rho}(LM))\subseteq E(LM)$ such that 
	$$
	\sigma l+\rho j=v_{\sigma,\rho}(f_{\sigma ,\rho}(LM))=v_{\sigma,\rho}(LM)=v_{\sigma,\rho}(L)+v_{\sigma,\rho}(M). 
	$$

	Assume now \ref{E:v sigma rho LM} holds. Then $f_{\sigma,\rho}(LM)\neq 0$ (as $v_{\sigma ,\rho}(L)\neq -\infty$ and $v_{\sigma ,\rho}(M)\neq -\infty$).
	Define $H=LM, H_1=f_{\sigma,\rho}(LM), H_2=H-H_1$ like in Corollary \ref{C:H1 H3 decomposition}. By  Corollary \ref{C:H1 H3 decomposition} we have
	$$
	v_{\sigma,\rho}(H_2)<v_{\sigma,\rho}(H) \quad \text{or}\quad  f_{\sigma,\rho}(H_2)=0.
	$$
	By Lemma \ref{L:New Dixmier 2.4} item 1  we have 
	$$
	v_{\sigma,\rho}(H_1)\leq \max\{v_{\sigma,\rho}(H),v_{\sigma,\rho}(H_2)\}=v_{\sigma,\rho}(H)
	$$ 
	In item 2, we have proved $v_{\sigma,\rho}(H)\leq v_{\sigma,\rho}(L)+v_{\sigma,\rho}(L)$ and equation \ref{E:v sigma rho LM} means $v_{\sigma,\rho}(H_1)=v_{\sigma,\rho}(L)+v_{\sigma,\rho}(M)$. Hence we must have 
	$$
	v_{\sigma,\rho}(H)=v_{\sigma,\rho}(H_1)=v_{\sigma,\rho}(L)+v_{\sigma,\rho}(M),
	$$
	 hence there exists $(l,j)\in E(H)$, such that $l\sigma+j\rho=v_{\sigma,\rho}(H)=v_{\sigma,\rho}(L)+v_{\sigma,\rho}(M)$. By item 1  there exist $(m,u)\in E(L)$ and $(n,v)\in E(M)$, such that $l\leq m+n, j\leq u+v$, thus 
	\begin{equation}
		\label {E:v sigma rho H<}
		v_{\sigma,\rho}(H)=l\sigma+j\rho\leq (m+n)\sigma+(u+v)\rho
	\end{equation}
	But $(m,u)\in E(L)$ and $(n,v)\in E(M)$, this means $\sigma m+\rho u\leq v_{\sigma}(L)$ and $\sigma n+\rho v\leq v_{\sigma}(M)$, hence 
	\begin{equation}
		\label{E:v sigma rho H>}
		v_{\sigma,\rho}(H)=v_{\sigma,\rho}(L)+v_{\sigma,\rho}(M)\ge (m+n)\sigma+(u+v)\rho
	\end{equation}
	Comparing two equations \ref{E:v sigma rho H<} and \ref{E:v sigma rho H>}, we get $m\sigma+u\rho=v_{\sigma,\rho}(L)$ and $n\sigma +v\rho=v_{\sigma,\rho}(M)$, this means $f_{\sigma,\rho}(L)\neq 0$ and $f_{\sigma,\rho}(M)\neq 0$. 
\end{proof}

As a result, we have a way to calculate $f_{\sigma,\rho}(LM)$ only by $f_{\sigma,\rho}(L),f_{\sigma,\rho}(M)$ when $v_{\sigma,\rho}(LM)=v_{\sigma,\rho}(L)+v_{\sigma,\rho}(M)$.

\begin{lemma}
	\label{L: f sigma rho LM}
    In the notations of lemma \ref{L:New Dixmier 2.7}, if $v_{\sigma,\rho}(LM)= v_{\sigma,\rho}(L)+v_{\sigma,\rho}(M)$, then 
	$$
	f_{\sigma,\rho}(LM)=f_{\sigma,\rho}[f_{\sigma,\rho}(L)f_{\sigma,\rho}(M)].
	$$
\end{lemma}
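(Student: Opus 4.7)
The idea is to isolate the weight-$v_L+v_M$ contribution to $LM$ (where $v_L := v_{\sigma,\rho}(L)$, $v_M := v_{\sigma,\rho}(M)$) by decomposing both factors into top part plus remainder. I would set $L_1 := f_{\sigma,\rho}(L)$, $L_2 := L - L_1$, $M_1 := f_{\sigma,\rho}(M)$, $M_2 := M - M_1$. Applying Corollary \ref{C:H1 H3 decomposition} to $L_2$ and $M_2$, the key fact I extract is that $E(L_2)$ contains no point of weight equal to $v_L$ and $E(M_2)$ contains no point of weight equal to $v_M$ (this is the common conclusion of both sub-cases of the corollary). Expanding
$$
LM \;=\; L_1 M_1 + L_1 M_2 + L_2 M_1 + L_2 M_2,
$$
the goal reduces to showing that each of the three mixed summands contributes no monomial of weight exactly $v_L+v_M$.

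The central step is point-chasing via Lemma \ref{L:New Dixmier 2.7} item 1. Suppose a point $(l,j) \in E(L_i M_{i'})$ with $(i,i') \in \{(1,2),(2,1),(2,2)\}$ satisfies $\sigma l + \rho j = v_L+v_M$. Then Lemma \ref{L:New Dixmier 2.7} item 1 produces $(m,u) \in E(L_i)$ and $(n,v) \in E(M_{i'})$ with $l \le m+n$ and $j \le u+v$. Combining with $\sigma m + \rho u \le v_L$ and $\sigma n + \rho v \le v_M$ yields
$$
v_L + v_M \;=\; \sigma l + \rho j \;\le\; \sigma(m+n) + \rho(u+v) \;\le\; v_L + v_M,
$$
so every inequality is an equality; in particular $(m,u)$ has weight exactly $v_L$ and $(n,v)$ has weight exactly $v_M$. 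But $i = 2$ would place a weight-$v_L$ point in $E(L_2)$, and $i' = 2$ would place a weight-$v_M$ point in $E(M_2)$, contradicting the fact recalled above.

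Hence the sum of monomials of $LM$ of weight exactly $v_L+v_M$ coincides with that of $L_1 M_1 = f_{\sigma,\rho}(L)\,f_{\sigma,\rho}(M)$. By the hypothesis $v_{\sigma,\rho}(LM)=v_L+v_M$ and the definition of $f_{\sigma,\rho}$, the former sum equals $f_{\sigma,\rho}(LM)$. When $f_{\sigma,\rho}(LM) \neq 0$, applying the converse part of Lemma \ref{L:f sigma,rho LM} to the pair $L_1,M_1$ (whose own top-weight parts equal $L_1,M_1$) yields $v_{\sigma,\rho}(L_1M_1)=v_L+v_M$, so the latter sum is precisely $f_{\sigma,\rho}(L_1 M_1)$, completing the identity. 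In the degenerate case $f_{\sigma,\rho}(L)=0$ or $f_{\sigma,\rho}(M)=0$ the right-hand side vanishes, and the contrapositive of Lemma \ref{L:f sigma,rho LM} forces $f_{\sigma,\rho}(LM)=0$ as well.

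The main subtlety to watch is that Corollary \ref{C:H1 H3 decomposition} does not always yield the clean inequality $v_{\sigma,\rho}(L_2) < v_L$: the sub-case where $v_{\sigma,\rho}(L_2) = v_L$ but $f_{\sigma,\rho}(L_2) = 0$ (weights accumulating to but not attaining $v_L$) really does occur. Consequently one cannot prove the lemma by a coarse sup-weight comparison such as "$v_{\sigma,\rho}(L_1 M_2) < v_L + v_M$"; the proof must be phrased as a point-by-point statement about $E$-sets. This is exactly what the point-chasing step does, using only the uniform conclusion that $E(L_2)$ has no point of weight $v_L$ (and similarly for $M_2$).
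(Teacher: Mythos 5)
Your proof is correct and takes essentially the same route as the paper: same decomposition $L=L_1+L_2$, $M=M_1+M_2$, same use of Lemma \ref{L:New Dixmier 2.7} item~1 to rule out weight-$(v_L+v_M)$ points in the mixed products. The only difference is organisational: the paper splits into four sub-cases according to whether $v_{\sigma,\rho}(L_2)<v_L$ or $v_{\sigma,\rho}(L_2)=v_L$ with $f_{\sigma,\rho}(L_2)=0$ (and similarly for $M_2$), proving a point-chasing Claim only in the latter sub-cases, whereas you extract the uniform fact ``$E(L_2)$ has no point of weight $v_L$'' and run the point-chasing once for all cases --- a tidier exposition of the same argument. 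One small remark: at the end you cite the converse part of Lemma \ref{L:f sigma,rho LM} to get $v_{\sigma,\rho}(L_1M_1)=v_L+v_M$, but that lemma's hypothesis is precisely what you are trying to establish; the conclusion is more cleanly obtained directly, noting that $N_{L_1M_1}=f_{\sigma,\rho}(LM)\neq 0$ exhibits a point of $E(L_1M_1)$ of weight $v_L+v_M$, which together with the upper bound $v_{\sigma,\rho}(L_1M_1)\le v_L+v_M$ from Lemma \ref{L:New Dixmier 2.7} item~2 forces equality.
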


\begin{proof}
	Assume first $f_{\sigma,\rho}(L)\neq 0,f_{\sigma,\rho}(M)\neq 0$. Put $L_1=f_{\sigma,\rho}(L)\neq 0$, $M_1=f_{\sigma,\rho}(M)\neq 0$, put $L_3=L-L_1$ and $M_3=M-M_1$. Consider the equation 
	$$
	H=LM=L_1M_1+L_1M_3+L_3M_1+L_3M_3
	$$
	For $L_3, M_3$, we have 4 possibilities:
	\begin{enumerate}
		\item [(1)] $v_{\sigma,\rho}(L_3)<v_{\sigma,\rho}(L_1)$,$v_{\sigma,\rho}(M_3)<v_{\sigma,\rho}(M_1)$
		\item [(2)] $v_{\sigma,\rho}(L_3)<v_{\sigma,\rho}(L_1)$,$v_{\sigma,\rho}(M_3)=v_{\sigma,\rho}(M_1)$, but $f_{\sigma,\rho}(M_3)=0$.
		\item [(3)] $v_{\sigma,\rho}(L_3)=v_{\sigma,\rho}(L_1)$,$v_{\sigma,\rho}(M_3)<v_{\sigma,\rho}(M_1)$, but $f_{\sigma,\rho}(L_3)=0$.
		\item [(4)] $v_{\sigma,\rho}(L_3)=v_{\sigma,\rho}(L_1)$,$v_{\sigma,\rho}(M_3)=v_{\sigma,\rho}(M_1)$, but $f_{\sigma,\rho}(L_3)=f_{\sigma,\rho}(M_3)=0$.
	\end{enumerate} 
	For (1), we know $v_{\sigma,\rho}(L_1M_3)\leq v_{\sigma,\rho}(L_1)+v_{\sigma,\rho}(M_3)<v_{\sigma,\rho}(L_1)+v_{\sigma,\rho}(M_1)$. By Lemma \ref{L:New Dixmier 2.4} item 2, we have $f_{\sigma,\rho}(L_1M_1+L_1M_3)=f_{\sigma,\rho}(L_1M_1)$, analogously for $L_3M_1$ and $L_3M_3$. We get 
	$$
	f_{\sigma,\rho}(H)=f_{\sigma,\rho}(L_1M_1)
	$$
	For (2), $f_{\sigma,\rho}(M_3)=0$ means for any $(n_3,v_3)\in E(M_3)$  $\sigma n_3+\rho v_3<v_{\sigma,\rho}(M_1)$. We need the following claim:
	
	Claim: There doesn't exist $(l,j)\in E(L_1M_3)$, such that $l\sigma+j\rho\ge  v_{\sigma,\rho}(L_1)+v_{\sigma,\rho}(M_1)$.

	(Proof of the Claim) Assume the converse, then by item 1, there exist $(m_1,u_1)\in E(L_1)$ and $(n_3,v_3)$, such that 
	$l\leq m_1+n_3$ and $j\leq u_1+v_3$, but we know $m_1\sigma+u_1\rho\leq v_{\sigma,\rho}(L_1)$ and $\sigma n_3+\rho v_3<v_{\sigma,\rho}(M_1)$, this is a contradiction.
	
	So this claim shows that $v_{\sigma,\rho}(L_1M_3)<v_{\sigma,\rho}(L)+v_{\sigma,\rho}(M)$ or $v_{\sigma,\rho}(L_1M_3)=v_{\sigma,\rho}(L)+v_{\sigma,\rho}(M)$, but $f_{\sigma,\rho}(L_1M_3)=0$. Like in (1) we can check $v_{\sigma,\rho}(L_3M_1)<v_{\sigma,\rho}(L_1M_1)$, $v_{\sigma,\rho}(L_3M_3)<v_{\sigma,\rho}(L_1M_1)$. So  we get again $f_{\sigma,\rho}(H)=f_{\sigma,\rho}(L_1M_1)$.
	
	Cases (3) and (4) are analogous,  we omit the details.
	
	If at least one of $f_{\sigma,\rho}(L)$ and $f_{\sigma,\rho}(M)=0$, then the above arguments show  there doesn't exist $(l,j)\in E(LM)$ such that $l\sigma+j\rho=v_{\sigma,\rho}(LM)= v_{\sigma,\rho}(L)+v_{\sigma,\rho}(M)$, hence $f_{\sigma,\rho}(LM)=0$.
\end{proof}

\noindent J. Guo,  School of Mathematics and Statistics, Leshan Normal University, Sichuan, China
\\ 
\noindent\ e-mail:
$123281697@qq.com$

\vspace{0.5cm}

\noindent A. Zheglov,  Lomonosov Moscow State  University, Faculty
of Mechanics and Mathematics, Department of differential geometry
and applications and Moscow Center of Fundamental and Applied Mathematics of Lomonosov Moscow State University,  Leninskie gory, GSP, Moscow, \nopagebreak 119899,
Russia
\\ \noindent e-mail
 $azheglov@math.msu.su$, $alexander.zheglov@math.msu.ru$, $abzv24@mail.ru$


\begin{thebibliography}{99}

\bibitem{BEE} Y. Berest, A. Eshmatov and F. Eshmatov, {\it Dixmier groups and Borel subgroups},
Adv. Math. 286 (2016), 387–429

\bibitem{Bavula3} V.V. Bavula, {\it Dixmier’s Problem 5 for the Weyl Algebra}, J. Algebra 283 (2) (2005) 604–621. 









\bibitem{BC1} J.~Burchnall, T.~Chaundy, \emph{Commutative ordinary differential operators},
 Proc.~London Math.~Soc.~\textbf{21}  (1923) 420--440.
 
\bibitem{BC2} J.~Burchnall, T.~Chaundy, \emph{Commutative ordinary differential operators},
Proc.~Royal Soc.~London (A) \textbf{118}, 557--583 (1928).

\bibitem{BC3} J.~Burchnall, T.~Chaundy, \emph{Commutative ordinary differential operators. II: The identity $P^n =Q^m$},
Proc.~Royal Soc.~London (A) \textbf{134}, 471--485 (1931).






 


\bibitem{Dixmier} J.~Dixmier, \textit{Sur les alg\`ebres de Weyl},  Bull.~Soc.~Math.~France \textbf{96} (1968) 209--242.


\bibitem{GZ24} J. Guo, A. Zheglov, {\it Normal forms for ordinary differential operators, I}, preprint, https://arxiv.org/abs/2406.14414


  
\bibitem{GGV} A. Jorge Guccione, J. Juan Guccione, C. Valqui, {\it The Dixmier conjecture and the shape of possible counterexamples}, J. of Algebra, 399 (2014), 581--633.

\bibitem{GGV2} A. Jorge Guccione, J. Juan Guccione, C. Valqui, {\it On the shape of possible
counterexamples to the Jacobian conjecture}, J. Algebra 471 (2017), 13–74.

\bibitem{Joseph} A. Joseph, \textit{The Weyl algebra-semisimple and nilpotent elements}, American Journal of Mathematics \textbf{97} (1975), 597-615. MR0379615 (52:520)

\bibitem{BK1} A. Ya. Kanel-Belov, M. L. Kontsevich,  {\it Automorphisms of the Weyl algebra.} Lett. Math. Phys. (2005) 74:181–199.









\bibitem{ML} L.G. Makar-Limanov, {\it Centralizers of Rank One in the First Weyl Algebra}, SIGMA, 17
(2021), 052, 13 pp.

\bibitem{ML2} L.G. Makar-Limanov, {\it On the Newton polyhedron of
a Jacobian pair}, Izvestiya: Mathematics, 2021, Volume 85, Issue 3, Pages 457–467

















\bibitem{Previato2019} E. Previato, S.L. Rueda, M.-A. Zurro, {\it Commuting Ordinary Differential Operators and the Dixmier Test},  SIGMA 15 (2019), 101, https://doi.org/10.3842/SIGMA.2019.101 





\bibitem{Ts2} Y. Tsuchimoto, {\it Endomorphisms of Weyl algebra and p-curvatures}, Osaka J. Math. 42 (2005), No. 2, 435-452.




\bibitem{Wilson} G. Wilson, {\it Algebraic curves and soliton equations}, in Geometry Today (Rome, 1984), Progr. Math., Vol. 60, Birkh\'auser Boston, MA, 1985, 303-329





\bibitem{A.Z} A.B. Zheglov, {\it Schur-Sato theory for quasi-elliptic rings} Proc. Steklov Inst. Math., 320 (2023), 115–160 (special issue dedicated to the memory of A.N. Parshin)

\bibitem{Zheglov_book} A. B. Zheglov, {\em Algebra, geometry and analysis of commuting ordinary differential operators}, Publ. house of the Board of trustees of the Faculty of mechanics and mathematics, Moscow state univ., 2020. - 217,  ISBN 978-5-9500628-4-1\\ 
can be found e.g. at 
https://www.researchgate.net/publication/\\
340952902AlgebraGeometryandAnalysisofCommutingOrdinaryDifferentialOperators 





\end{thebibliography}
\end{document}